\newcommand*{\ngradpar}{\nabla_{\parallel}}
\newcommand*{\ngradperp}{\nabla_{\perp}}
\newcommand*{\nlaplpar}{\nabla_{\parallel}^2}
\newcommand*{\nlaplperp}{\nabla_{\perp}^2}
\newcommand*{\vpare}{V_{\parallel e}}
\newcommand*{\ijkv}{i^+,j^+,k^+}
\newcommand*{\xijkv}{x_{i^+},y_{j^+},z_{k^+}}
\newtheorem{thm}{Theorem}
\newtheorem{rmk}{Remark}
\newtheorem{corollary}{Corollary}[thm]
\title{Mimetic finite difference schemes for transport operators with divergence--free advective field\\and applications to plasma physics
%%%% Cite as
%%%% Update your official citation here when published 
%\thanks{\textit{\underline{Citation}}: 
%\textbf{Authors. Title. Pages.... DOI:000000/11111.}} 
}
\author{
  Micol Bassanini \\
   Institute of Mathematics \& Swiss Plasma Center (SPC)\\
  École Polytechnique Fédérale de Lausanne (EPFL)\\
  Lausanne, 1015, Switzerland \\
  \texttt{micol.bassanini@epfl.ch} \\
  %% examples of more authors
  \And 
  Simone Deparis\\
  Institute of Mathematics \\
  École Polytechnique Fédérale de Lausanne (EPFL)\\
  Lausanne, 1015, Switzerland \\
   \And
  Paolo Ricci \\
  Swiss Plasma Center (SPC)\\
  École Polytechnique Fédérale de Lausanne (EPFL)\\
  Lausanne, 1015, Switzerland \\
  %% \AND
  %% Coauthor \\
  %% Affiliation \\
  %% Address \\
  %% \texttt{email} \\
  %% \And
  %% Coauthor \\
  %% Affiliation \\
  %% Address \\
  %% \texttt{email} \\
  %% \And
  %% Coauthor \\
  %% Affiliation \\
  %% Address \\
  %% \texttt{email} \\
}
\begin{document}
\maketitle

\begin{abstract}
In wave propagation problems, finite difference methods implemented on staggered grids are commonly used to avoid checkerboard patterns and to improve accuracy in the approximation of short--wavelength components of the solutions.  
In this study, we develop a mimetic finite difference (MFD) method on staggered grids for transport operators with divergence--free advective field that is proven to be energy--preserving in wave problems. This method mimics some characteristics of the summation--by--parts (SBP) operators framework, in particular it preserves the divergence theorem at the discrete level. Its design is intended to be versatile and applicable to wave problems characterized by a divergence--free velocity.
As an application, we consider the electrostatic shear Alfvén waves (SAWs), appearing in the modeling of plasmas. These waves are solved in a magnetic field configuration recalling that of a tokamak device. The study of the generalized eigenvalue problem associated with the SAWs shows the energy conservation of the discretization scheme, demonstrating the stability of the numerical solution.

\end{abstract}

% keywords can be removed
\keywords{Staggered grid \and  Skew--symmetry \and Summation by parts operators \and Divergence--free advective field  \and Shear Alfvén waves \and Conservative finite difference methods in plasma physics}

\section{Introduction}
\label{sec1}
%% Labels are used to cross-reference an item using \ref command.

The use of staggered grids is a common technique in addressing wave propagation challenges, offering a direct and effective approach to avoid odd--even decoupling issues, \cite{patankar2018numerical}. Odd--even decoupling is a common numerical error that appears in collocated grids, where all variables are stored at identical locations, resulting in checkerboard patterns of the solution, \cite{durran2010numerical}. Moreover, the staggered grids provide a more accurate approximation of both the phase and the group speed in the case of short--wavelength components comparable with the grid size \cite{durran2013numerical}.
Examples of the use of the staggered grid can be found in computational fluid dynamics, as one of the first strategies to avoid pressure--velocity decoupling, in the context of solving Navier--Stokes equations through finite difference methods \cite{sharma2021introduction}, \cite{patankar2018numerical}. Within this family of numerical methods, the Yee scheme holds particular significance, offering a reliable methodology for discretizing and solving Maxwell's equations by employing staggered grids in both temporal and spatial dimensions, \cite{yee1966numerical}.

 It has been consistently observed that the reliability of numerical simulations significantly improves when the numerical discretization preserves or mimics the fundamental mathematical properties of the physical model, \cite{lipnikov2014mimetic}. Indeed, the mimetic finite difference (MFD) methods are designed to preserve key properties of continuum equations, such as energy conservation, at the discrete level \cite{lipnikov2014mimetic}. For example, in the works by Arakawa and Lamb, in \cite{arakawa1981potential} and \cite{arakawa1977computational},
they identified a numerical scheme on staggered grids capable of conserving potential enstrophy and total energy for the flow of the shallow water equations. 
Summation--by--parts (SBP) operators are an example of this family, designed to replicate integration by parts at the discrete level \cite{fernandez2014review}, \cite{svard2014review}. 

In the present work, we propose a MFD scheme, based on the use of the staggered grid with characteristics typical of SBP operators and able to preserve the divergence theorem at the discrete level. 
In its standard form, SBP operators are defined in collocation grids, where all variables are stored at the same grid
points, for handling first or second derivatives. In contrast, the present method discretizes advective operators with divergence--free velocity on a staggered grid.
While previous research has explored the extension of SBP methods to staggered grids for wave propagation problems, as discussed in \cite{o2017energy}, the approach we present specifically addresses transport operators with divergence--free advective fields.
This approach is crucial for preserving the energy conservation properties of a wave problem. In this work, we do not focus on the imposition of boundary conditions. This task presents an additional challenge for high--order finite difference schemes, as solutions in different parts of the domain must be accurately and stably connected. The stencils near boundaries introduce further complexities. One approach to address this issue is the Simultaneous--Approximation--Term (SAT) technique, which applies boundary and interface conditions in a weak form, \cite{svard2014review}.

The advantage of the numerical scheme we propose is of particular interest for systems that present a strong anisotropy. This is the case of strongly magnetized plasma.
Indeed, the space scale along the direction parallel to the equilibrium magnetic field is orders of magnitude greater than the scale length perpendicular to the magnetic field, making the discretization of the parallel gradient $\ngradpar f=\textbf{b}\cdot \boldsymbol{\nabla}f$, where $\textbf{b}$ denotes the unit vector of the magnetic field, particularly challenging.
Anisotropy is frequently addressed by using coordinates aligned to $\textbf{b}$, which allows reducing the numerical grid density along the resultant parallel direction. This strategy is particularly effective for modeling the core region of fusion devices \cite{garbet2010gyrokinetic}. 
However, field-aligned coordinates encounter singularity issues in e.g. the simulation of fusion devices \cite{stegmeir2023analysis}. Singularities in field--aligned coordinate systems can arise when the magnetic field configuration includes saddle points, as is common in diverted geometries. 
One straightforward strategy to address this issue, used for example in the BOUT code \cite{umansky2009status}, is to avoid placing grid points at the magnetic saddle and to partition the domain into subregions where field--aligned coordinates remain well--defined. 
%However, this method introduces a dependency between the computational mesh and the equilibrium magnetic field. 
An alternative strategy, implemented in the GBS code \cite{giacomin2022gbs}, is to discretize the equations on a grid that, in the limit of large aspect ratio, is Cartesian and uniform \cite{ricci2012simulation}. Unlike field--aligned coordinate systems, this approach is independent of the equilibrium magnetic field structure. 
%In toroidal devices such as tokamaks, the additional computational cost associated with using non--field--aligned coordinates can be mitigated by exploiting the large aspect ratio and the strong toroidal dominance of the magnetic field. These features enable a simplified formulation of differential operators on a Cartesian grid.
Furthermore, this approach facilitates the implementation of boundary conditions representing plasma--wall interactions, as the domain boundaries align naturally with the physical geometry of the device. Recently, advancements have been made to incorporate a curvilinear finite difference scheme into GBS, enabling the simulation of more complex geometries. This enhancement provides greater flexibility in representing fixed wall boundaries without having the issues related to the field--aligned coordinates.
The straightforward discretization of the parallel gradient using non--aligned coordinates and staggered grids does not preserve the divergence theorem at the discrete level.
Significant efforts have been made to discretize the parallel Laplacian operator $\nlaplpar$ using finite difference methods  in the study of high magnetized plasma \cite{GUNTER2005354}, \cite{gunter2007finite} exploiting a grid staggered with respect to the original one in all the directions. We prove that in a particular case an approach leads to the implementation of the parallel Laplacian reported in \cite{GUNTER2005354}. The significance of our algorithm is highlighted by the widespread use of finite differences for spatial discretization in most MHD and two--fluid codes, largely because of their implementation simplicity.

We note that in this work, we adopt the skew--symmetric approach \cite{morinishi2010skew} to reformulate the parallel gradient operator $\ngradpar$, characterized by a divergence--free advective field, as a weighted average of the advective $\mathbf{b}\cdot \boldsymbol{\nabla}\bullet$ and divergence forms $\nabla \cdot( \mathbf{b}\bullet)$. Additionally, we establish strict relationships that connect the discretization of the operators on the two staggered grids. The concept of developing a conservative scheme of arbitrary order on staggered grids by averaging the advective and divergence forms of the convective term, thereby resulting in a skew--symmetric operator, originates from the work of \cite{morinishi1998fully}, \cite{morinishi2010skew}. Their research focused on ensuring the conservation of mass, momentum, and kinetic energy in the direct numerical simulation (DNS) of the Navier--Stokes equations.
Furthermore, it has been extended to fluid plasma models, where it was used to reformulate the hyperbolic components of the equations at the continuous level \cite{halpern2018anti}, \cite{halpern2020anti}.
More recently, Halpern et al. applied this methodology to discretize the diffusive term, enhancing spectral fidelity \cite{halpern2024paralleldiffusionoperatormagnetized}.

As a test of the numerical scheme we propose, we consider the electrostatic Shear Alfvén waves (SAWs) \cite{Jolliet:207684}, which are stable plasma waves described by a hyperbolic system of equations for the electron parallel velocity and the electrostatic potential. 
In most fluid descriptions of plasma, the SAWs constitute the fastest oscillations in the direction of the equilibrium magnetic field. We demonstrate that our scheme guarantees that the SAWs are also stable at the discrete level. We analyze the system of SAWs with the inclusion of parallel diffusion in the equation for electron parallel velocities, as this term is physically present in the two--fluid plasma model due to the gyroviscous effects. Accounting for this diffusion is essential because it influences the behavior of the SAWs.

This paper is organized as follows. After the Introduction, Sec.~\ref{sec:numerical_scheme} defines the employed operators and presents their discretization on staggered grids. Also, we construct the MFD scheme for the parallel gradient operator and we prove that our discretization preserves the divergence theorem at the discrete level. In Sec.~\ref{sec:results} these schemes are applied to solve a wave model problem in a three--dimensional setting. Additionally, we demonstrate that preserving the divergence theorem at the discrete level in the context of wave problems is necessary to achieve energy conservation in the system. Sec.~\ref{sec:saws} focuses on
applying the discretization scheme proposed in  Sec.\ref{sec:numerical_scheme} to the SAWs to assess the energy conservation of the new staggered grid operators.
The conclusions follow in Sec.~\ref{sec:concl}.
\section{Mimetic finite difference discretization of the parallel gradient on staggered grids}
\label{sec:numerical_scheme}
In this section, we construct an MFD scheme on staggered grids to discretize transport operators with divergence--free advective field $\mathbf{b}$. Namely, we define the transport operator $\ngradpar:\mathbb{R}\to \mathbb{R}$ such that $\ngradpar f=\textbf{b}\cdot \boldsymbol{\nabla}f$, with $\nabla\cdot \textbf{b}=0$; in the following, we will refer to this operator as the parallel gradient operator. At the continuous level, by taking $f=pq$, the divergence theorem states that
\begin{equation}
    \int_{\Omega} p\ngradpar q  \ d\Omega+\int_{\Omega}q\ngradpar p \  d\Omega= \int_{\partial\Omega} pq \textbf{b}\cdot \textbf{n} \ ds,
    \label{eqn:theo_div}
\end{equation}
where $p$ and $q$ are two scalar fields and $\Omega$ is a three--dimensional bounded domain with boundary $\partial \Omega$.
The proposed algorithm preserves the divergence theorem Eq.~\eqref{eqn:theo_div} at the discrete level when using staggered grids in a three--dimensional Cartesian geometry.

In wave problems a staggering between the grids on which the two different fields $p$ and $q$ of Eq.~\eqref{eqn:theo_div} are evaluated is necessary to avoid the emergence of checkerboard patterns. 
\begin{figure}[t]
    \centering
    \includegraphics[scale=0.5]{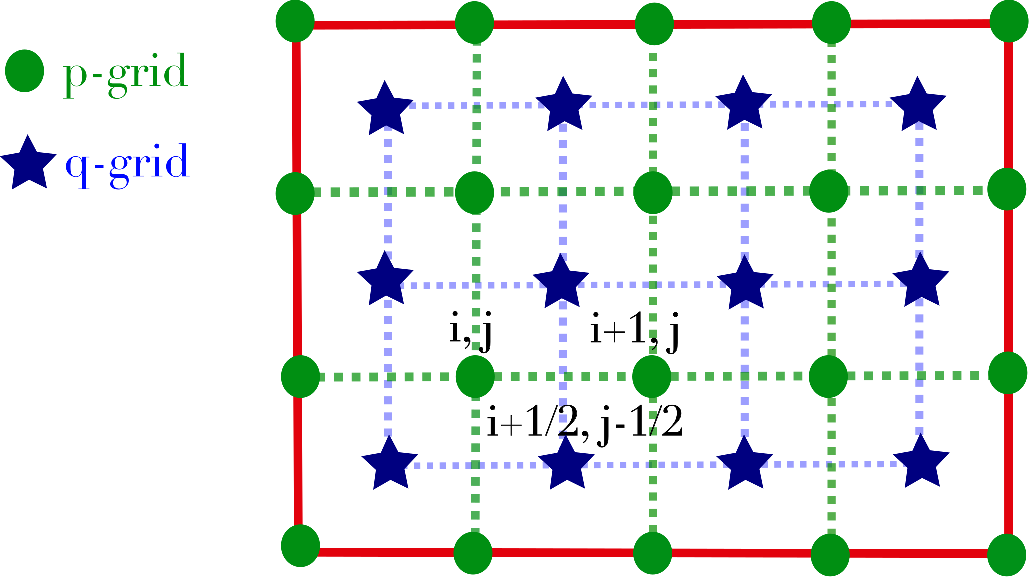}
    %\vspace{-2.5cm}
    \caption{Sketch of the two grids in a two dimensional setting where the red line represents the physical domain, the green dashed line the $p$-grid with the green points as nodes and the blue dashed line describes the $q$-grid with blue stars as nodes.}
    \label{fig:sketch_nv}
\end{figure}
Hence, a three--dimensional domain can be discretized with two uniform Cartesian grids; one denoted as $p$-grid, whose last nodes coincide with the physical boundary of the domain, and another grid, denoted as $q$-grid, which is staggered in every direction of half of a cell, as shown in Fig.~\ref{fig:sketch_nv}. The scalar fields $p$ and $q$ are evaluated on the $p$-grid and $q$-grid respectively.
We define the set of indices in the three directions as $\mathcal{I}^\gamma=\mathcal{I}_x^\gamma \times\mathcal{I}_y^\gamma \times \mathcal{I}_z^\gamma $ with $\mathcal{I}_x^\gamma=\{1,..., N_x^\gamma\}$, $\mathcal{I}_y^\gamma=\{1,..., N_y^\gamma\}$ and $\mathcal{I}_z^\gamma=\{1,..., N_z^\gamma\}$ where $\gamma \in \{p,q\}$ and $N_x^\gamma$, $N_y^\gamma$, $N_z^\gamma$ are the nodes respectively in the $x$, $y$ and $z$ direction in the $\gamma$-grid.

Because we have variables defined on two different grids, it becomes necessary to define the parallel gradient operators that map between these grids.
Note that $\nabla\cdot \left(\textbf{b}f\right)$ at the continuous level is equal to $\textbf{b}\cdot \boldsymbol{\nabla }f$, since $\nabla\cdot \textbf{b}=0$. Starting from the definition of the parallel gradient, we define the discrete operators with the help of the staggered indices $i^+=i+\frac{1}{2}$, $j^+=j+\frac{1}{2}$, $k^+=k+\frac{1}{2}$ and $i^-=i-\frac{1}{2}$, $j^-=j-\frac{1}{2}$, $k^-=k-\frac{1}{2}$ and $\Vec{f}^p$, $\Vec{f}^q$ as the vectors associated with the scalar field $f$
\begin{equation}
    \begin{alignedat}{2}
    &\Vec{f}^p\in \mathbb{R}^{N_x^p N_y^p N_z^p}: \left(\Vec{f}^p\right)_{\mathcal{F}^p(i,j,k)}=\left(f(x_i,y_j,z_k)\right)_{i,j,k} \text{ on the $p$-grid}, \\
    & \Vec{f}^q \in \mathbb{R}^{N_x^q N_y^q N_z^q}:  \left(\Vec{f}^q\right)_{\mathcal{F}^q(i^+,j^+,k^+)}=\left(f(\xijkv)\right)_{i^+,j^+,k^+} \text{ on the $q$-grid},
 \label{eqn:vecform}
 \end{alignedat}
 \end{equation}
where $\mathcal{F}^\gamma:\mathbb{R}^{N_x^\gamma \times N_y^\gamma \times N_z^\gamma}\to \mathbb{R}^{N_x^\gamma N_y^\gamma N_z^\gamma}$ is a suitable indexing.
Considering that \begin{equation*}
    \mathbf{b}= \left[b^x(x,y,z), b^y(x,y,z),b^z(x,y,z)\right]^T
\end{equation*}
is a given advective field that can be evaluated on the $p$-grid and on the $q$-grid, we define the operator $\textbf{b} \cdot \boldsymbol{\nabla}\vert_{pq}f:q$-grid $\rightarrow p$-grid as:
\begin{equation}
\begin{alignedat}{2}
    \left[\textbf{b} \cdot \boldsymbol{\nabla}\vert_{pq}  f\right](x_i,y_j,z_k)=&b^x (x_i,y_j,z_k) \left[D_x \vert_{pq}f\right]\left(x_i,y_j,z_k\right) +\\&b^y(x_i,y_j,z_k) \left[D_y \vert_{pq}f\right]\left(x_i,y_j,z_k\right)+\\& b^z(x_i,y_j,z_k) \left[ D_z \vert_{pq}f\right]\left(x_i,y_j,z_k\right), 
    \label{eqn:parallelgradq2p}
\end{alignedat}
\end{equation}
where the operators $D_x$, $D_y$, and $D_z$ approximate the individual derivatives using a centered finite difference scheme. In this case the advective field $\textbf{b}$ is evaluated on the $p$-grid and the operators $D_\delta |_{pq} f: q$-grid $\rightarrow p$-grid with $\delta \in \{x,y,z\} $ take values of $f$ on the $q$-grid and produce results on the $p$-grid.
 We define the operator $\nabla \vert_{pq} \cdot\left(\textbf{b}f\right):q$-grid $\rightarrow p$-grid applied to $f$ on the $q$-grid and evaluated in $(x_i,y_j,z_k)$ as
\begin{equation}
\begin{alignedat}{2}
   \left[ \nabla \vert_{pq} \cdot \left(\mathbf{b} f\right)\right](x_i,y_j,z_k)=&\left[D_x\vert_{pq}\left(b^x f\right)\right](x_i,y_j,z_k)+\\&\left[D_y\vert_{pq}\left(b^y f\right)\right](x_i,y_j,z_k)+\\&\left[D_z\vert_{pq} \left(b^z f\right)\right](x_i,y_j,z_k).
    \label{eqn:div_q2p}
\end{alignedat}
\end{equation}
where the operators $D_\delta |_{pq}(b^\delta f):q$-grid $\rightarrow p$-grid take values of $b^\delta f$ in the $q$-grid and produce results on the $p$-grid.
We also define the operator $\textbf{b}\cdot \boldsymbol{\nabla}\vert_{qp}f: p$-grid $\rightarrow q$-grid as
\begin{equation}
\begin{alignedat}{2}
    \left[\mathbf{b}\cdot \boldsymbol{\nabla}\vert_{qp} f\right] (\xijkv)=&b^x (\xijkv) \left[D_x\vert_{qp} f\right]\left(\xijkv\right) +\\&b^y(\xijkv)\left[D_y \vert_{qp}f\right]\left(\xijkv\right)+\\& b^z (\xijkv)\left[D_z \vert_{qp} f\right]\left(\xijkv\right),
    \label{eqn:parallelgradp2q}
\end{alignedat}
\end{equation}
and the operator $\nabla \vert_{qp} \cdot( \mathbf{b} f):p$-grid $\rightarrow q$-grid applied to $f$ living in the $p$-grid and evaluated in $(\xijkv)$ as
\begin{equation}
\begin{alignedat}{2}
     \left[\nabla \vert_{qp} \cdot \left(\textbf{b} f\right)\right](\xijkv)=&\left[D_x\vert_{qp}\left(b^x f\right)\right](\xijkv) +\\&\left[D_y\vert_{qp}\left(b^y f\right)\right](\xijkv)+\\&\left[D_z\vert_{qp}\left(b^z  f\right)\right](\xijkv).
     \label{eqn:div_p2q}
\end{alignedat}
\end{equation}
With the help of these discrete operators, for $\alpha$, $\beta \in \left[0,1\right]$, we define the parallel gradient $ \ngradpar\vert_{pq}  f: q$-grid $\rightarrow$ $p$-grid as a weighted average of two operators $(\textbf{b}\cdot \boldsymbol{\nabla}\vert_{pq}) \bullet$ and $\nabla \vert_{pq} \cdot (\textbf{b} \bullet)$:
\begin{equation}
\begin{alignedat}{2}
    \ngradpar\vert_{pq} f (x_i,y_j,z_k)=&\alpha\left[\textbf{b} \cdot \boldsymbol{\nabla}\vert_{pq} f\right] (x_i,y_j,z_k) +\\& (1-\alpha)\left[\nabla \vert_{pq} \cdot \left(\textbf{b} f\right)\right](x_i,y_j,z_k).
    \label{eqn:weigter_ngradpar_pq}
 \end{alignedat}
\end{equation}
It is possible to rewrite the operator $\ngradpar\vert_{pq}$ applied to $f$ in a matrix--vector form as $\textbf{C}_{pq} \Vec{f}^q$ where $\Vec{f}^q$ is the vector associated with the scalar field $f$ as in Eq.~\eqref{eqn:vecform} and, the matrix associated with the operator $\ngradpar\vert_{pq}$ is $\textbf{C}_{pq}=\left(\alpha\textbf{A}_{pq}+(1-\alpha)\textbf{B}_{pq}\right)$ where the matrices $\textbf{A}_{pq}$ and $\textbf{B}_{pq}$ are associated, respectively, with the discretized operators $(\textbf{b} \cdot \boldsymbol{\nabla}\vert_{pq})\bullet$ and $\nabla\vert_{pq}\cdot \left(\textbf{b} \bullet\right)$.
The parallel gradient $\ngradpar\vert_{qp} f: p$-grid $\rightarrow$ $q$-grid is defined as a weighted average of two operators $(\textbf{b} \cdot \boldsymbol{\nabla}\vert_{qp})\bullet$ and $\nabla \vert_{qp}\cdot (\textbf{b} \bullet)$: 
\begin{equation}
\begin{alignedat}{2}
    \ngradpar\vert_{qp} f (\xijkv)=&\beta \left[\mathbf{b} \cdot \boldsymbol{\nabla}\vert_{qp} f\right] (\xijkv)+ \\& (1-\beta)\left[\nabla \vert_{qp} \cdot \left(\mathbf{b} f\right)\right](\xijkv) .
     \label{eqn:weigter_ngradpar_qp}
 \end{alignedat}
\end{equation}
Similarly, it is possible to rewrite the operator $\ngradpar\vert_{qp}$ applied to $f$ in a matrix--vector form as $\textbf{C}_{qp}\Vec{f}^p$ where $\Vec{f}^p$ is the vector associated with the scalar field $f$ as in Eq.~\eqref{eqn:vecform} and, the matrix associated with the operator $\ngradpar\vert_{qp}$ is $\textbf{C}_{qp}=\left(\alpha\textbf{A}_{qp}+(1-\alpha)\textbf{B}_{qp}\right)$ where the matrices $\textbf{A}_{qp}$ and $\textbf{B}_{qp}$ are associated, respectively, with the discretized operators $(\textbf{b}  \cdot \boldsymbol{\nabla}\vert_{qp})\bullet$ and $\nabla\vert_{qp}\cdot \left( \textbf{b}\bullet\right)$.

We note that the overall convergence order of the discretization scheme depends on the discretization of the derivatives along the $x$, $y$, and $z$ directions.
%The primary requirement for the operators $D_x$, $D_y$, and $D_z$ is that they approximate the individual derivatives using a centered finite difference scheme. 
In this context, we provide the implementation of the $x$-direction derivative, $D_x$, which is second--order accurate. Of course, higher--order differences can also be employed and the implementations of the derivatives in the other directions are analogous.
The operator $D_x\vert_{pq}(\bullet)$ applied to a function $f$ evaluated in $(x_i,y_j,z_k)$ is defined as:
\begin{equation}
    \begin{alignedat}{2}
      \left[D_x\vert_{pq} f\right](x_i,y_j,z_k)= &\frac{1}{4\Delta x}\bigg[\bigg(f(x_{i^+},y_{j^-},z_{k^-})+f(x_{i^+},y_{j^+},z_{k^+})+\\&f(x_{i^+},y_{j^+},z_{k^-})+f(x_{i^+},y_{j^-},z_{k^+})\bigg)-\\&\bigg(f(x_{i^-},y_{j^-},z_{k^-})+f(x_{i^-},y_{j^+},z_{k^+})+\\&f(x_{i^-},y_{j^+},z_{k^-})+f(x_{i^-},y_{j^-},z_{k^+})\bigg)\bigg],
      \label{eqn:dx_q2p}
    \end{alignedat}
\end{equation}
while the operator $D_x\vert_{qp}(\bullet)$ applied to $f$ and evaluated in $(\xijkv)$ is defined in the following way:
\begin{equation}
\begin{alignedat}{2}
   \left[D_x\vert_{qp} f\right](\xijkv)= &\frac{1}{4\Delta x}\bigg[\bigg(g(x_{i+1},y_j,z_k)+g(x_{i+1},y_{j+1},z_{k+1})+\\&g(x_{i+1},y_{j+1},z_k)+g(x_{i+1},y_j,z_{k+1})\bigg)-\\&\bigg(g(x_{i},y_j,z_k)+g(x_{i},y_{j+1},z_{k+1})+\\&g(x_{i},y_{j+1},z_k)+g(x_{i},y_j,z_{k+1})\bigg)\bigg].
   \label{eqn:dx_p2q}
\end{alignedat}
\end{equation} 
\begin{rmk}
The accuracy of the operators $\nabla_\parallel \vert (\bullet)$ is guaranteed by the fact that the operators are defined as a weighted average of two operators that are both second--order accurate in space. This makes the proposed discretization of the parallel gradient second--order accurate in space.
\end{rmk}

We now prove that the choice of $\beta=1-\alpha$ ensures that Eq.~\eqref{eqn:theo_div} is verified at the discrete level in the case of homogeneous Dirichlet boundary conditions. 
Considering $p$ and $q$ scalar fields evaluated respectively on the $p$ and $q$-grid, we introduce the matrices $\textbf{X}_p\in \mathbb{R}^{N_x^pN_y^pN_z^p \times N_x^pN_y^pN_z^p}$ and $\textbf{X}_q\in \mathbb{R}^{N_x^qN_y^qN_z^q\times N_x^qN_y^qN_z^q}$ that are diagonal and positive definite and allows to compute the discrete $L^2$ norms in the two grids:
\begin{equation}
\begin{alignedat}{2}
    &\|\Vec{p}\|^2=\Vec{p}^T \textbf{X}_p \Vec{p}=\Delta x\Delta y \Delta z \sum_{i\in \mathcal{I}_x^p}\sum_{j\in \mathcal{I}_y^p}\sum_{k\in \mathcal{I}_z^p}w_{i,j,k}^p(p_{i,j,k})^2\approx \int_{\Omega} p^2 \ d\Omega,\\
    &\|\Vec{q}\|^2=\Vec{q}^T \textbf{X}_q \Vec{q}=\Delta x\Delta y \Delta z \sum_{i\in \mathcal{I}_x^q}\sum_{j\in \mathcal{I}_y^q}\sum_{k\in \mathcal{I}_z^q}w_{i^+,j^+,k^+}^q(q_{i^+,j^+,k^+})^2\approx \int_{\Omega} q^2 \ d\Omega,
    \label{eqn:norm}
\end{alignedat}
\end{equation}
where $w_p$ and $w_q$ are the weights for the quadrature formula. 
The weights in the interior points are usually equal to 1, as shown in \cite{o2017energy}.
In the general three--dimensional case, assuming the use of the trapezoidal rule for numerical integration, the matrix $\textbf{X}_p$ is a diagonal matrix whose diagonal entries are the weights $w_{i,j,k}^p$. These weights are determined by the location of the points within the domain: they are $(\Delta x \Delta y \Delta z)/8$ at the corners, $(\Delta x \Delta y \Delta z )/4$ along the edges,  $(\Delta x \Delta y \Delta z)/2$ on the faces, and $(\Delta x \Delta y \Delta z)$ at the interior points.
On the other hand, as shown in Fig.~\ref{fig:sketch_nv}, since there are no degrees of freedom on the boundary for the $q$-grid, the matrix $\textbf{X}_q$ is the identity matrix scaled by $(\Delta x\Delta y\Delta z)$.

We note that the boundary conditions can modify the weights of $\textbf{X}_p$. For instance, if periodic boundary conditions are applied in one or more directions, the treatment of boundary points changes accordingly, which may lead to adjustments in the corresponding weights.
%, i.e. $w_{i,j,k}^\gamma=1$ if $(x_i,y_j,z_k)\notin \partial\Omega$.
\begin{thm}
If $\alpha+\beta=1$, then the divergence theorem Eq.~\eqref{eqn:theo_div} with homogeneous Dirichlet boundary conditions is preserved in the discrete setting. Moreover, $\textbf{C}_{pq}=-\textbf{C}_{qp}^T$.
    \label{theo:energy}
\end{thm}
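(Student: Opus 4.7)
The plan is to recast the claimed discrete divergence theorem in the matrix-vector language introduced above Theorem~\ref{theo:energy}, reduce it to a pair of adjointness identities between the advective and divergence forms on the two staggered grids, and finally exploit the explicit structure of the second-order stencils in Eqs.~\eqref{eqn:dx_q2p}--\eqref{eqn:dx_p2q}. Concretely, using the weighted $L^2$ products in Eq.~\eqref{eqn:norm}, the left-hand side of Eq.~\eqref{eqn:theo_div} reads, at the discrete level, $\Vec{p}^T\textbf{X}_p\textbf{C}_{pq}\Vec{q} + \Vec{q}^T\textbf{X}_q\textbf{C}_{qp}\Vec{p}$, while the homogeneous Dirichlet condition kills the surface contribution. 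I need to show that this sum vanishes for all $\Vec p$ supported in the interior and all $\Vec q$ whenever $\alpha+\beta=1$, and upgrade this to the matrix identity $\textbf{C}_{pq}=-\textbf{C}_{qp}^T$.

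The first step is to establish two discrete ``cross'' integration-by-parts identities that mirror the continuous cancellation $\int_\Omega p(\mathbf{b}\cdot\boldsymbol{\nabla}q)\,d\Omega + \int_\Omega q\,\nabla\cdot(\mathbf{b}p)\,d\Omega = 0$:
\begin{equation*}
\Vec{p}^T\textbf{X}_p\textbf{A}_{pq}\Vec{q} + \Vec{q}^T\textbf{X}_q\textbf{B}_{qp}\Vec{p} = 0, \qquad \Vec{p}^T\textbf{X}_p\textbf{B}_{pq}\Vec{q} + \Vec{q}^T\textbf{X}_q\textbf{A}_{qp}\Vec{p} = 0.
\end{equation*}
Equivalently, I must prove $\textbf{X}_p\textbf{A}_{pq} = -\textbf{B}_{qp}^T\textbf{X}_q$ and $\textbf{X}_p\textbf{B}_{pq} = -\textbf{A}_{qp}^T\textbf{X}_q$ when restricted to vectors that are zero on $\partial p$-grid.

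The second step is to reduce both identities to a single statement about the one-dimensional stencils, namely that $D_\delta\vert_{pq}$ is (up to sign and weight rescaling) the transpose of $D_\delta\vert_{qp}$ for each $\delta\in\{x,y,z\}$. A direct inspection of Eqs.~\eqref{eqn:dx_q2p}--\eqref{eqn:dx_p2q} shows that the eight nonzero entries in the row of $D_x\vert_{pq}$ at $(i,j,k)$ and the eight nonzero entries in the row of $D_x\vert_{qp}$ at $(\ijkv)$ match pointwise up to a global sign, once the indices are swapped. Since the matrix $\textbf{A}_{pq}$ is obtained by left-multiplying $D_\delta\vert_{pq}$ by $\mathrm{diag}(b^\delta)$ on the $p$-grid, while $\textbf{B}_{qp}$ is obtained by right-multiplying $D_\delta\vert_{qp}$ by $\mathrm{diag}(b^\delta)$ on the $p$-grid, the transposition naturally exchanges advective and divergence forms and yields the desired adjointness.

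The third step is purely algebraic. Expanding $\textbf{C}_{pq}=\alpha\textbf{A}_{pq}+(1-\alpha)\textbf{B}_{pq}$ and $\textbf{C}_{qp}=\beta\textbf{A}_{qp}+(1-\beta)\textbf{B}_{qp}$, grouping the four resulting bilinear terms and applying the two adjoint identities, the sum collapses to a multiple of $(1-\alpha-\beta)$ times a generically nonzero bilinear form in $(\Vec p,\Vec q)$; therefore the discrete divergence theorem holds if and only if $\alpha+\beta=1$. Finally, with $\alpha+\beta=1$ the identity $\Vec{p}^T\textbf{X}_p\textbf{C}_{pq}\Vec{q} = -\Vec{p}^T\textbf{C}_{qp}^T\textbf{X}_q\Vec{q}$ holds for all admissible test vectors, and since on the interior $\textbf{X}_p$ and $\textbf{X}_q$ both coincide with $(\Delta x\Delta y\Delta z)\,\textbf{I}$, the strengthened statement $\textbf{C}_{pq}=-\textbf{C}_{qp}^T$ follows.

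The main obstacle will be the second step, i.e. the stencil-level verification of the adjointness, because the boundary rows of $\textbf{X}_p$ carry reduced weights ($1/8$, $1/4$ or $1/2$ times the interior weight) while $\textbf{X}_q$ is a clean multiple of the identity, and the difference stencils touch these boundary rows. The homogeneous Dirichlet hypothesis is precisely what kills the terms that would otherwise spoil the transpose relation, and careful bookkeeping of which stencil entries hit boundary $p$-nodes is required to see that no such term survives. Once this direction-by-direction check is performed, the rest of the argument is linear algebra.
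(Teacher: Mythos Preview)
Your proposal is correct and follows essentially the same route as the paper: the paper also splits $E_d$ direction by direction into the four contributions you call $\textbf{A}_{pq},\textbf{B}_{pq},\textbf{A}_{qp},\textbf{B}_{qp}$ (labelled $L,M,N,O$ there), expands the explicit stencils of Eqs.~\eqref{eqn:dx_q2p}--\eqref{eqn:dx_p2q}, and observes the same two cross-cancellations $L\leftrightarrow O$ and $M\leftrightarrow N$ that are exactly your adjoint identities $\textbf{X}_p\textbf{A}_{pq}=-\textbf{B}_{qp}^T\textbf{X}_q$ and $\textbf{X}_p\textbf{B}_{pq}=-\textbf{A}_{qp}^T\textbf{X}_q$. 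Your matrix-level packaging and the final collapse to the factor $(1-\alpha-\beta)(S_1-S_2)$ make the algebra a bit cleaner than the paper's term-by-term inspection, but the argument is the same.
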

\begin{proof}
Because of the homogeneous boundary conditions, the right--hand side of Eq.~\eqref{eqn:theo_div} vanishes. 
%At the discrete level, this implies that the discretized version of the energy, $E_d$, \textcolor{red}{should} also vanishes: 
Consequently, at the discrete level, the corresponding discretized energy $E_d$ is expected to satisfy: 
\begin{equation}
E_d=\Vec{p}^T\textbf{X}_p\textbf{C}_{pq} \Vec{q}+ \Vec{q}^T\textbf{X}_q\textbf{C}_{qp}\Vec{p}=0,
\label{eqn:proof_den}
\end{equation} 
where $\Vec{p}$ and $\Vec{q}$ are the vectors of size respectively $N_x^p\cdot N_y^p \cdot N_z^p $ and $N_x^q\cdot N_y^q \cdot N_z^q $ containing all the degrees of freedom associated with $p(x_i,y_j,z_k)$ and $q(x_{i^+},y_{j^+},z_{k^+})$ as in Eq.~\eqref{eqn:vecform}. 
In the following, we derive a condition on $\alpha$ and $\beta$ under which Eq.~\eqref{eqn:proof_den} holds.
This equation can be rewritten component--wise for the interior point as:
\begin{equation}
\displaystyle\sum_{(i,j,k) \ \in \mathcal{I}^p} p_{i,j,k}(\ngradpar \vert_{pq} q)_{i,j,k}  +
    \displaystyle\sum_{(i^+,j^+,k^+) \ \in \ \mathcal{I}^q} q_{\ijkv}  \left(\ngradpar\vert_{qp} p\right)_{\ijkv}=0,
    \label{eqn:rhs_energy}
\end{equation}
since the weights $w^p$ and $w^q$ in Eq.~\eqref{eqn:norm} are 1 for the interior points.

Without loss of generality and considering that $E_d=E_d^x+E_d^y+E_d^z$, we can focus our analysis on the contribution to the energy from the first component, $x$, of the gradient in Eq.~\eqref{eqn:rhs_energy}, that is:
\begin{equation}
\begin{alignedat}{2}
    E_d^x=&\displaystyle\sum_{(i,j,k) \ \in \mathcal{I}^p} p_{i,j,k}(\ngradpar \vert_{pq}^x q)_{i,j,k}  +
    \displaystyle\sum_{(i^+,j^+,k^+) \ \in \ \mathcal{I}^q} q_{\ijkv}  \left(\ngradpar\vert_{qp}^x p\right)_{\ijkv}\\
    =&\displaystyle\sum_{(i,j,k) \ \in \mathcal{I}^p} \underbrace{p_{i,j,k}\alpha \left(b^x_{i,j,k} D_x\vert_{pq}q (x_i,y_j,z_k)\right)}_{L}+\\& \displaystyle\sum_{(i,j,k) \ \in \mathcal{I}^p} \underbrace{p_{i,j,k} (1-\alpha) \left(D_x\vert_{pq}\left(b^x  q\right)(x_i,y_j,z_k)\right)}_{M}+\\ &\displaystyle\sum_{(i^+,j^+,k^+) \ \in \mathcal{I}^q}\underbrace{q_{\ijkv}\beta \left(b^x_{\ijkv} D_x\vert_{qp}p(\xijkv)\right)}_{N}+\\&\displaystyle\sum_{(i^+,j^+,k^+) \ \in \mathcal{I}^q} \underbrace{q_{\ijkv}(1-\beta)\left(D_x\vert_{qp}\left(b^x p\right)(\xijkv)\right)}_{O}.
    \label{eqn:discret_x}
\end{alignedat}
\end{equation}

We can further develop Eq.~\eqref{eqn:discret_x} inserting the definition of the operators $D_x\vert_{pq}\bullet$ and $D_x\vert_{qp}\bullet$ reported in Eq.~\eqref{eqn:dx_q2p} and Eq.~\eqref{eqn:dx_p2q}  in the following way:
\begin{equation*}
\begin{alignedat}{2}
&E_d^x=\displaystyle\sum_{(i,j,k) \ \in \mathcal{I}^p} \bigg[\frac{p_{i,j,k}}{4\Delta x} \alpha b^x_{i,j,k}\bigg[\bigg(\overbrace{q_{i^+,j^-,k^-}}^{1}+q_{i^+,j^+,k^+} +q_{i^+,j^+,k^-}+q_{i^+,j^-,k^+}\bigg)\\ & -\bigg(q_{i^-,j^-,k^-}+q_{i^-,j^+,k^+} +q_{i^-,j^+,k^-}
+q_{i^-,j^-,k^+}\bigg)\bigg]\\& +\frac{p_{i,j,k}}{4\Delta x} (1-\alpha)\bigg[\bigg(\overbrace{(b^x  q)_{i^+,j^-,k^-}}^{2}+(b^x q)_{i^+,j^+,k^+}+(b^x q)_{i^+,j^+,k^-}+(b^x  q)_{i^+,j^-,k^+}\bigg)\\&   -\bigg((b^x q)_{i^-,j^-,k^-}+(b^x q)_{i^-,j^+,k^+}+(b^x q)_{i^-,j^+,k^-})+(b^x q)_{i^-,j^-,k^+}\bigg)\bigg]\bigg]
\\
  & +\displaystyle\sum_{(i^+,j^+,k^+) \ \in \mathcal{I}^q}\bigg[\frac{q_{i^+,j^+,k^+}}{4\Delta x}\beta b^x_{i^+,j^+,k^+}\bigg[\bigg(p_{i+1,j,k}+p_{i+1,j+1,k}+p_{i+1,j,k+1}\\&+p_{i+1,j+1,k+1}\bigg)   -\bigg(\overbrace{p_{i,j,k}}^{2}+p_{i,j+1,k}+p_{i,j,k+1}+p_{i,j+1,k+1}\bigg)\bigg]\\ & 
   +\frac{q_{i^+,j^+,k^+}}{4\Delta x} (1-\beta)\bigg[\bigg((b^x p)_{i+1,j,k}+(b^x p)_{i+1,j+1,k}+(b^x p)_{i+1,j,k+1}+\\&(b^x p)_{i+1,j+1,k+1}\bigg)  -\bigg(\overbrace{(b^x p)_{i,j,k}}^{1}+(b^x p)_{i,j+1,k}+(b^x p)^{i,j,k+1}+(b^x p)_{i,j+1,k+1}\bigg)\bigg]\bigg].
\end{alignedat}
\end{equation*}

By writing out all the contributions, we observe that the terms arising from the sum denoted by $L$ in Eq.~\eqref{eqn:discret_x} appear in the sum denoted by $O$, with opposite signs and are scaled by $(1-\beta)$ instead of $\alpha$. Moreover, we find the same contribution due to the sum $M$ in the sum $N$ with opposite sign and multiplied by $\beta$ instead of $(1-\alpha)$. 
\end{proof}
\begin{rmk}
If we consider the discretization formula of the parallel Laplacian $\nlaplpar$ as reported in \cite{gunter2007finite} and we choose $\alpha=0$ and $\beta=1$, the composition of the two operators $\ngradpar\vert_{qp}$ and $\ngradpar\vert_{pq}$ and so the resulting matrix $\textbf{C}_{qp}\textbf{C}_{pq}$ coincides with the matrix associated with the operator $\nlaplpar$ on the $q$-grid. Similarly, if $\alpha=1$ and $\beta=0$ the composition of $\ngradpar\vert_{pq}$ and $\ngradpar\vert_{qp}$ is equal to the matrix describing the $\nlaplpar$ on the $p$-grid.
\end{rmk}
\section{Wave model problem}
\label{sec:results}
\hspace*{0.3cm}
As the first model problem, we consider a wave that propagates parallel to $\textbf{b}$:
\begin{equation}
\begin{alignedat}{2}
\begin{cases}
\pdv{p}{t}=\ngradpar q,&\\
\pdv{q}{t}=\ngradpar p.
\end{cases}
 \label{eqn:model}
\end{alignedat}
\end{equation}
on $\Omega \subset \mathbb{R}^3$ which is equivalent to 
\begin{equation}
    \frac{\partial^2 p}{\partial t}= \frac{\partial }{\partial t}\left(\textbf{b}\cdot \boldsymbol{\nabla} q\right)=\textbf{b}\cdot \boldsymbol{\nabla} \left(\pdv{q}{t}\right)=\textbf{b}\cdot \boldsymbol{\nabla} \left(\textbf{b}\cdot \boldsymbol{\nabla} p \right)= \nlaplpar p.
    \label{eqn:wave}%
\end{equation}
where $\nlaplpar$ is the parallel Laplacian defined as $\nlaplpar: \mathbb{R}\to \mathbb{R}$ such that $\nlaplpar f =\mathbf{b}\cdot \boldsymbol{\nabla } \left(\mathbf{b}\cdot \boldsymbol{\nabla }f \right)$.
The system describes a wave propagating in the direction of $\textbf{b}$. 
 
 \subsection{Boundary conditions}
\label{sec:bc}
We discuss the boundary conditions to impose to Eqs.~\eqref{eqn:model}. For this purpose, we apply the change of variables $r_1=p+q$ and $r_2=p-q$.
The model problem in Eqs.~\eqref{eqn:model} can then be rewritten as:
\begin{equation}
    \begin{alignedat}{2}
    \begin{cases}
        \pdv{r_1}{t}&=\textbf{b}\cdot\boldsymbol{\nabla}r_1,\\
        \pdv{r_2}{t}&=-\textbf{b}\cdot\boldsymbol{\nabla}r_2,
   \end{cases}
    \end{alignedat}
    \label{eqn:model_decoup}
\end{equation}
where the variables $r_1$ and $r_2$ are decoupled. Eq.~\eqref{eqn:model_decoup} requires that the boundary conditions are imposed for $r_1$ on the inlet part of the domain $\Gamma_{-}=\{x\in\partial\Omega : \textbf{b}\cdot \textbf{n}<0\}$, and for $r_2$ on the outlet part $\Gamma_{+}=\{x\in\partial\Omega : \textbf{b}\cdot \textbf{n}>0\}$, where $\textbf{n}$ is the normal vector pointing out of the domain.
\begin{figure}[t]
    \centering
    \includegraphics[scale=0.65]{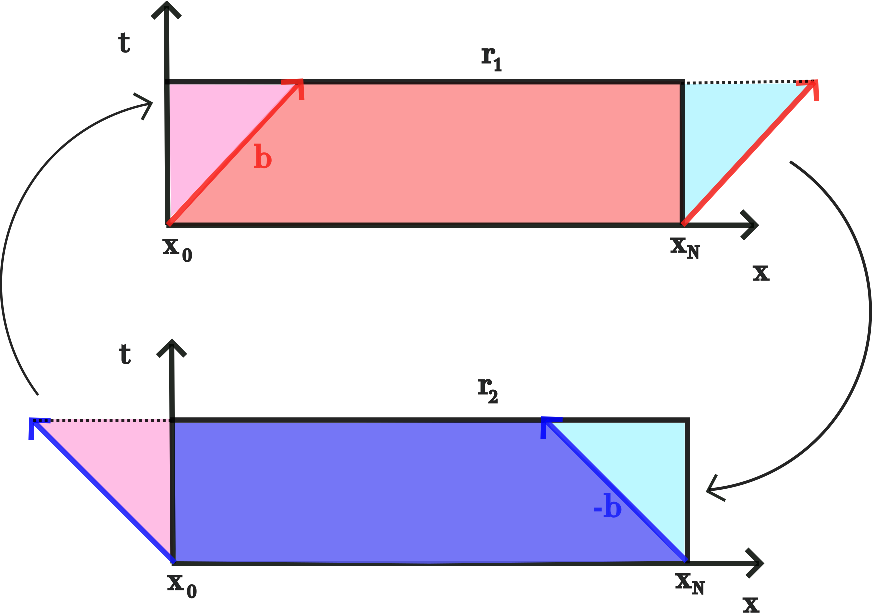}
    \caption{Schematic 1D solution of the problem Eq.~\eqref{eqn:model_decoup} in a $x$-$t$ space with the application of the boundary conditions. In this case, the inflow of $r_1$ is $x_0$ and the outflow is $x_{N}$. For $r_2$ the inflow is $x_{N}$ where the information arrives from the outflow of $r_1$ and the outflow is $x_0$.}
    \label{fig:bc}
\end{figure}
A possible choice of the boundary conditions is $r_1= -r_2$ on $\Gamma_-$ and $r_2= -r_1$ on $\Gamma_+ $
that is $r_1+r_2=0$ on $ \partial \Omega$. 
One possibility to satisfy $r_1+r_2=0$ is to impose $p=0$ on $\partial \Omega$, while $q$ is left free on the boundary.

A schematic version of the process in a one--dimensional setting is reported in Fig.~\ref{fig:bc}. 
For our specific test case, we consider a three--dimensional domain, where we impose $p=0$, on the boundaries of the $x$-$y$ planes for every $z$. In the $z$ direction, we apply periodic boundary conditions to simulate the tokamak domain in our application.
%\begin{figure}[t]
%    \centering
%    \vspace{-2cm}
%    \includegraphics[width=0.8\linewidth]{}
 %   \caption{Discretized three-dimensional domain used in our test cases}
 %   \label{fig:domain}
%\end{figure}

\subsection{Energy conservation}
\label{sec:ener_model}
%To ensure that the discretized system approximates sufficiently well the time evolution of the equations, we study the conservation of energy.
We now turn to the energy conservation properties of the model in Eqs.~\eqref{eqn:model}, an important feature that the discretized system has to retain.
Starting from Eqs.~\eqref{eqn:model}, we multiply the first equation by $p$ and the second by $q$, and we integrate them in space over a domain $\Omega$. Summing up, we obtain
\begin{equation}
  \int_{\Omega}p\pdv{p}{t} +q\pdv{q}{t} \ d\Omega=\int_{\Omega} \left(p\textbf{b}\cdot \boldsymbol{\nabla} q+q\textbf{b}\cdot \boldsymbol{\nabla} p \right)\ d\Omega,
  \label{eqn:energy1}
\end{equation}
which is equivalent to
\begin{equation}
 \frac{\partial}{\partial t}\int_{\Omega} \left(\frac{p^2}{2}+\frac{q^2}{2}\right)\ d\Omega=\int_{\partial\Omega } pq\textbf{b}\cdot \textbf{n}\ d s=0.
    \label{eqn:conservation_model}
\end{equation}
since $p=0$ on $\partial \Omega$. Eq.~\eqref{eqn:conservation_model} express the conservation in time of the quantity $E=\int_{\Omega}\left(p^2/2+q^2/2\right)\ d\Omega$ that we define as energy.

\subsection{Energy conservation of the space-discretized system}
Problem \eqref{eqn:model} is linear and it can be discretized in space as outlined in Sec.~\ref{sec:numerical_scheme}.
Eqs.~\eqref{eqn:model} can be written in a matrix form as 
\begin{equation}
    \frac{d}{dt}
\left[\begin{array}{c}
\Vec{p} \\
\Vec{q}
\end{array}\right]
=\left[\begin{array}{c|c}
\textbf{0} & \textbf{C}_{pq} \\
\hline
\textbf{C}_{qp} & \textbf{0}
\end{array}
\right]\left[\begin{array}{c}
\Vec{p} \\
\Vec{q}
\end{array}\right].
\label{eqn:discretized_eq}
\end{equation}
\begin{corollary}
If $\alpha+\beta=1$, then the discrete problem \eqref{eqn:discretized_eq} is energy--preserving, i.e. $\frac{d}{dt}\int_\Omega E \ d\Omega=0$.
    \label{cor:energy}
\end{corollary}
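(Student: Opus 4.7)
The plan is to imitate the continuous energy argument from Sec.~3.2 at the discrete level and reduce the problem to an immediate application of Theorem~\ref{theo:energy}. The natural candidate for the discrete energy, mirroring $E=\int_\Omega (p^2/2+q^2/2)\,d\Omega$, is
\begin{equation*}
E_d(t)=\tfrac{1}{2}\,\vec{p}^T\textbf{X}_p\vec{p}+\tfrac{1}{2}\,\vec{q}^T\textbf{X}_q\vec{q},
\end{equation*}
which approximates $E$ through the quadrature rules introduced in Eq.~\eqref{eqn:norm}. Since $\textbf{X}_p$ and $\textbf{X}_q$ are diagonal (hence symmetric), differentiating in time and using symmetry yields
\begin{equation*}
\frac{dE_d}{dt}=\vec{p}^T\textbf{X}_p\,\frac{d\vec{p}}{dt}+\vec{q}^T\textbf{X}_q\,\frac{d\vec{q}}{dt}.
\end{equation*}

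Next, I would substitute the semi--discrete evolution \eqref{eqn:discretized_eq}, which gives $d\vec{p}/dt=\textbf{C}_{pq}\vec{q}$ and $d\vec{q}/dt=\textbf{C}_{qp}\vec{p}$. Inserting these into the previous display produces exactly
\begin{equation*}
\frac{dE_d}{dt}=\vec{p}^T\textbf{X}_p\textbf{C}_{pq}\vec{q}+\vec{q}^T\textbf{X}_q\textbf{C}_{qp}\vec{p},
\end{equation*}
which is precisely the quantity denoted $E_d$ in Eq.~\eqref{eqn:proof_den} of Theorem~\ref{theo:energy}. Under the hypothesis $\alpha+\beta=1$ together with the homogeneous Dirichlet condition $\vec{p}=0$ on $\partial\Omega$ (guaranteed by the boundary treatment of Sec.~\ref{sec:bc}), Theorem~\ref{theo:energy} asserts that this expression vanishes, so $dE_d/dt=0$.

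There is essentially no obstacle here beyond a bookkeeping check: one must verify that the Dirichlet assumption of Theorem~\ref{theo:energy} is the one actually enforced by the model problem, and that the weight matrices $\textbf{X}_p,\textbf{X}_q$ entering the discrete energy are the same as those used in the discrete divergence identity. Both are consistent by construction. A minor remark is that Theorem~\ref{theo:energy} treats only interior contributions (weights equal to one), but the Dirichlet condition kills the boundary terms where $\textbf{X}_p$ carries reduced weights, so no additional argument is needed. The corollary thus follows at once from Theorem~\ref{theo:energy}, and in fact is equivalent to the skew--adjointness relation $\textbf{C}_{pq}=-\textbf{C}_{qp}^T$ (in the inner products induced by $\textbf{X}_p$ and $\textbf{X}_q$), making the block operator in Eq.~\eqref{eqn:discretized_eq} skew--symmetric and hence energy--preserving.
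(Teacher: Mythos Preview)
Your argument is correct and follows essentially the same route as the paper: define the discrete energy via the $\textbf{X}_p$, $\textbf{X}_q$ norms, differentiate, substitute \eqref{eqn:discretized_eq}, and invoke Theorem~\ref{theo:energy} to conclude that $\vec{p}^T\textbf{X}_p\textbf{C}_{pq}\vec{q}+\vec{q}^T\textbf{X}_q\textbf{C}_{qp}\vec{p}=0$. Your additional remarks on the boundary weights and the skew--adjointness interpretation are accurate but not needed for the proof itself.
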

\begin{proof}
We notice that
\begin{equation*}
    \frac{1}{2}\frac{d\|\Vec{x}\|^2}{dt}=\frac{1}{2}\frac{d}{dt}(\|\Vec{p}\|^2+\|\Vec{q}\|^2)=\Vec{p}^T\textbf{X}_p\textbf{C}_{pq}\Vec{q}+\Vec{q}^T\textbf{X}_q\textbf{C}_{qp}\Vec{p}.
\end{equation*}
and that
\begin{equation}
\Vec{p}^T\textbf{X}_p\textbf{C}_{pq} \Vec{q}+ \Vec{q}^T\textbf{X}_q\textbf{C}_{qp}\Vec{p}=0,
\end{equation} 
is a direct consequence of Theorem~\ref{theo:energy}.
\end{proof}
Referring to Theorem~\ref{theo:energy}, we note that if one employs a straightforward discretization of the operator $\textbf{b}\cdot \boldsymbol{\nabla}$ with $\alpha=1$ and $\beta=1$, the resulting method lacks energy conservation and exhibits instability as $t\rightarrow\infty$. It is noteworthy that transforming the original problem described by Eqs.~\eqref{eqn:model} into flux form, representing the right--hand side as the divergence of the product between $\textbf{b}$ and the scalar variable, and subsequently employing a direct discretization of the operators with $\alpha=0$ and $\beta=0$ does not yield energy conservation either.

\subsection{Numerical test}
We consider
\begin{equation}
\begin{alignedat}{2}
    \boldsymbol{b}=&\left[\partial_y \Psi, -\partial_x \Psi, -1\right]\\
    \Psi=&\frac{1}{2}A_{mag} Ei\left(-\frac{((x-x_{mag})^2+(y-y_{mag1})^2)}{a_s^{2}}\right)-\\&\frac{1}{2}A_{mag}\log\left((x-x_{mag})^2+(y-y_{mag1})^2\right)-\\&\frac{1}{2}A_{mag}\log\left((x-x_{mag})^2+(y-y_{mag2})^2\right);
    \label{eqn:psi}
\end{alignedat}
\end{equation}
where $Ei(x)=\int_{- \infty}^x \frac{e^t}{t} \ dt$ and $A_{mag}$, $x_{mag}$, $y_{mag1}$, $y_{mag2}$ and $a_s$ are parameters reported in \ref{app1}.
%For our test cases, we consider an analytical form of $\textbf{b}$ that in the following Sec. \ref{sec:saws} will describe a magnetic field at equilibrium, 
Fig.~\ref{fig:quiver} reports the $xy$ components of the $\textbf{b}$ vector field.
\begin{figure}[t]
    \centering
        \vspace{-1cm}
    \includegraphics[scale=0.37]{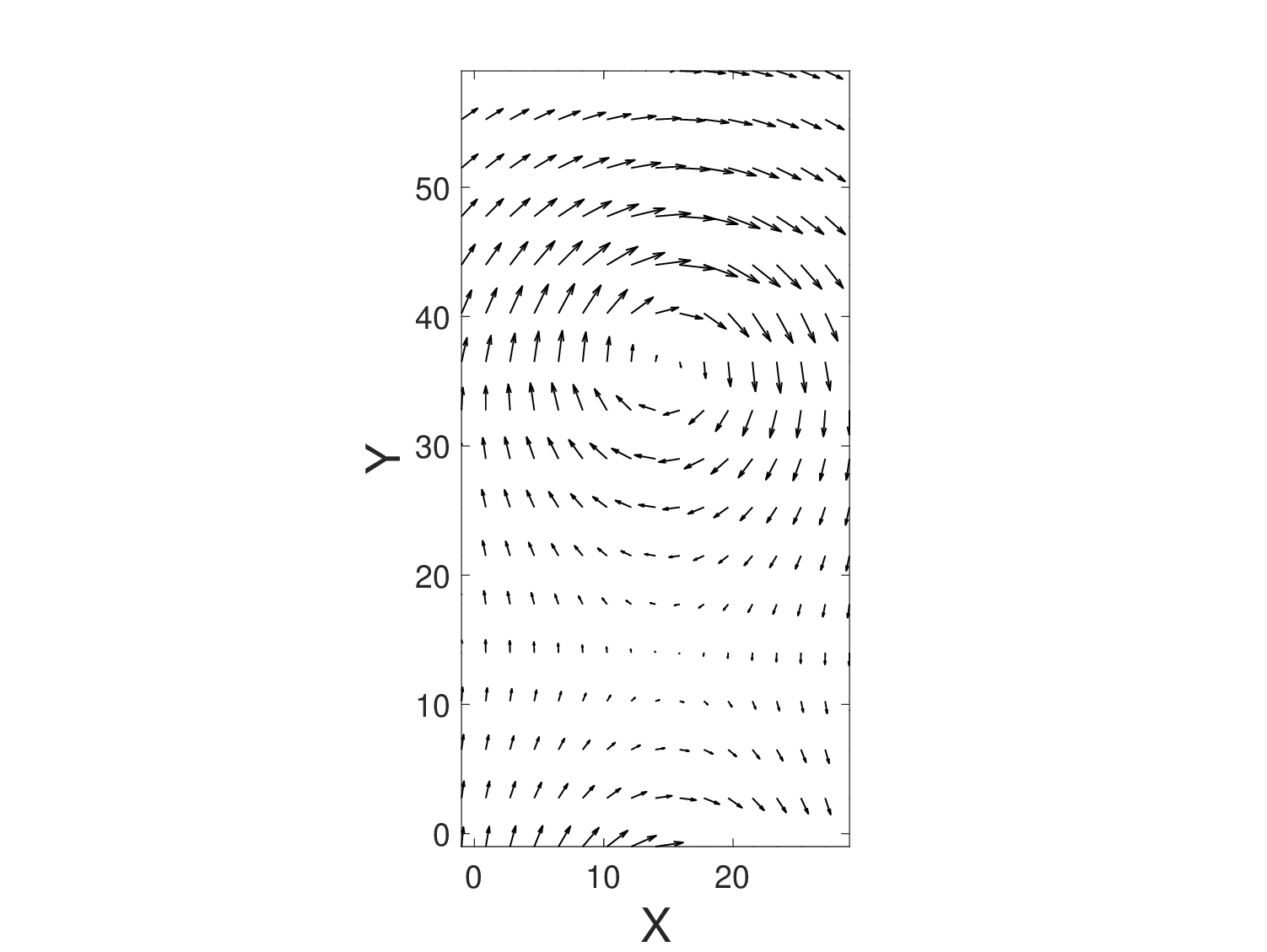}
    \caption{The quiver plot of the $x$-$y$ component of the vector field $\textbf{b}$ with a space grid $N_x \times N_y \times N_z=16\times16\times16$. The $z$ component is independent of the position and is always equal to -1.}
    \label{fig:quiver}
\end{figure}
The choice of the boundary condition in Sec.~\ref{sec:bc} is coherent with the choice of the nodes in the grids as shown in Fig.~\ref{fig:sketch_nv}. On the $q$-grid, boundary points are excluded from the degrees of freedom. On the $p$-grid, they are included to allow flexibility in imposing boundary conditions. Operators are constructed on all points, but for homogeneous Dirichlet conditions, boundary values are set to vanish, making the approach effectively equivalent to considering only the interior points as true degrees of freedom.
\begin{figure}[t]
    \centering
          \begin{subfigure}[b]{0.48\textwidth}
          \hspace{-2mm}
    \includegraphics[width=\textwidth]{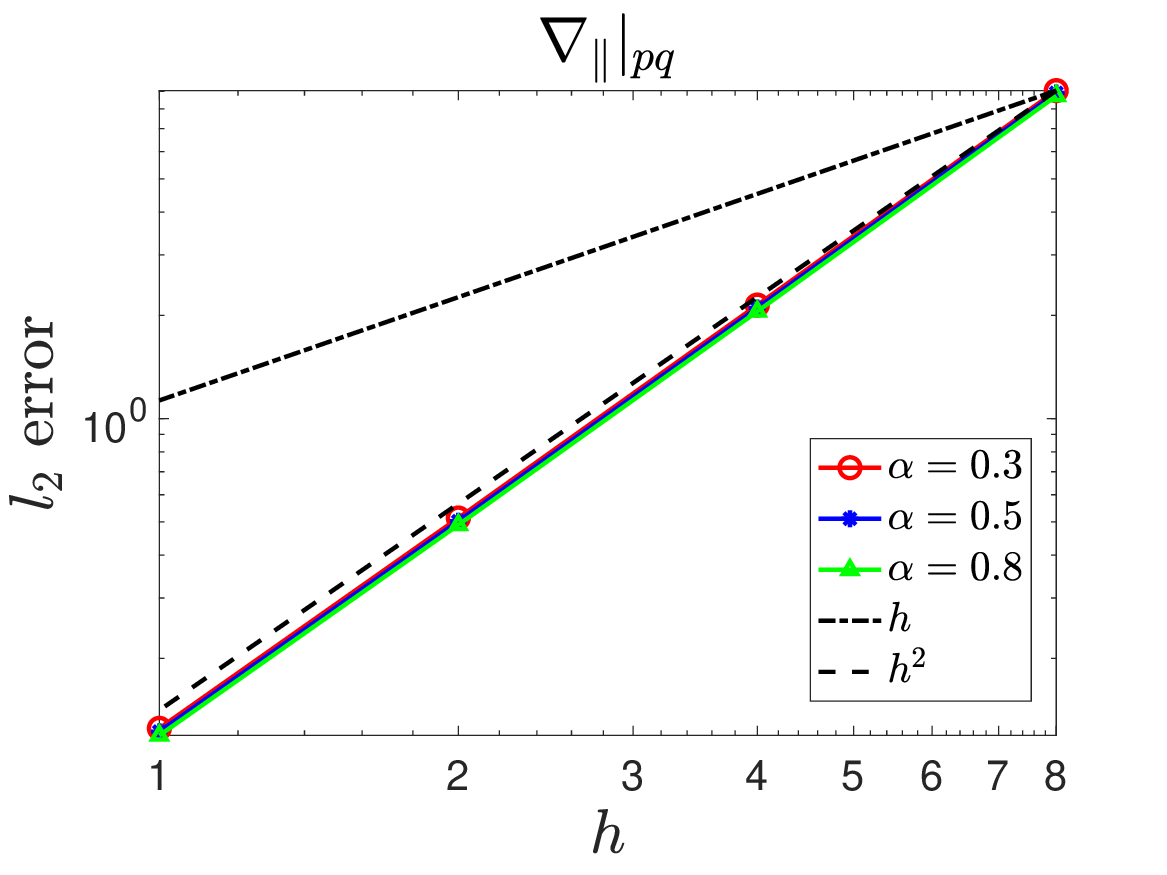}
    \caption{}
    \label{fig:convergence_gradpar_v2n}
    \end{subfigure}
    \begin{subfigure}[b]{0.48\textwidth}
    \includegraphics[width=\textwidth]{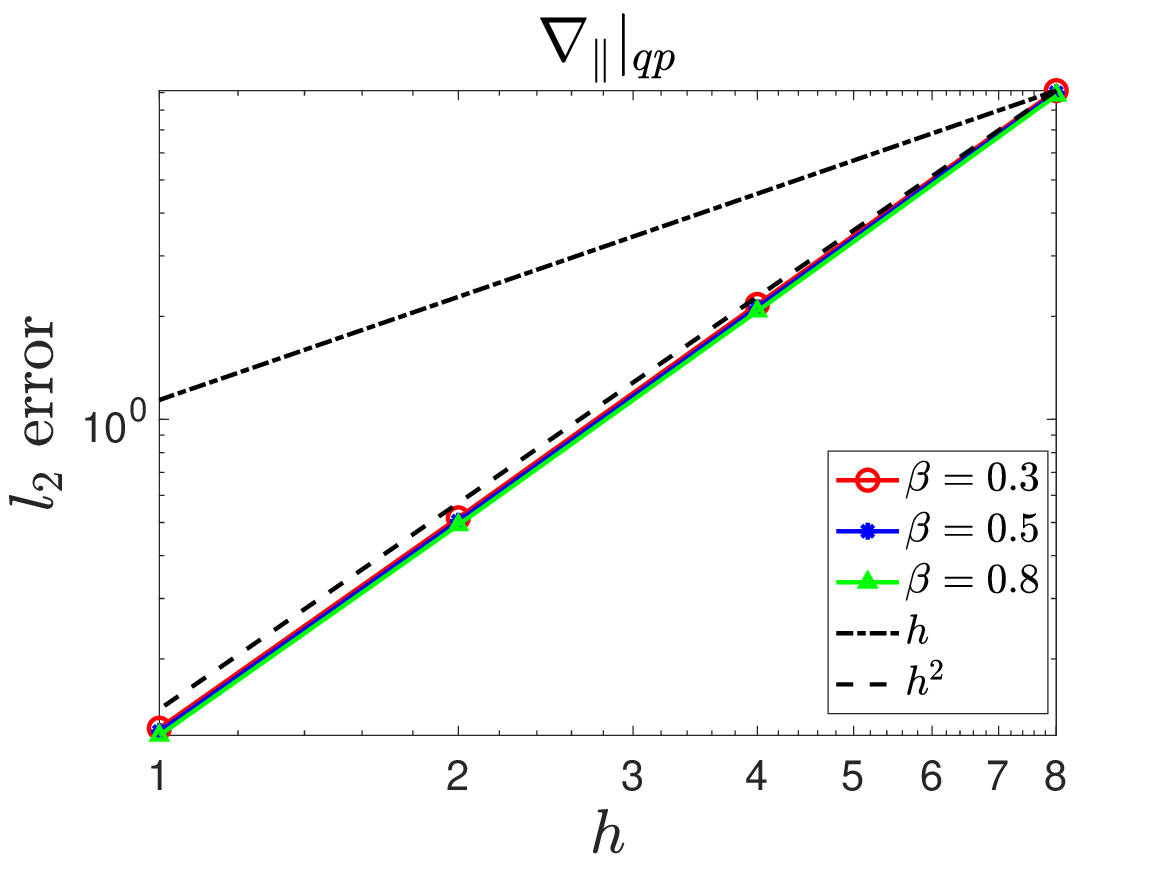}
    \caption{}
    \label{fig:convergence_gradpar_n2v}
    \end{subfigure}
    \caption{Convergence study of the operator $\ngradpar|_{pq}$ in Fig.~\ref{fig:convergence_gradpar_v2n} and operator $\ngradpar |_{qp}$ in Fig.~\ref{fig:convergence_gradpar_n2v}, with $\textbf{b} $ given in Eq.~\eqref{eqn:psi} and uniform spatial resolution such that $h = \frac{\Delta x}{\Delta x_0} \times \frac{\Delta y}{\Delta y_0}\times \frac{\Delta z}{\Delta z_0}$ where $\Delta x_0$, $\Delta y_0$ and $\Delta z_0$ are the grid spacings for $N_x\times N_y\times N_y= 64\times 64\times 64$.}
    \label{fig:convergence_gradpar}
\end{figure}

A spatial convergence study is carried out for the operators $\ngradpar|_{pq}$ and $\ngradpar|_{qp}$ across different values of $\alpha$ and $\beta$. Fig.~\ref{fig:convergence_gradpar} presents the $l_2$ norm of the error between the numerical and analytical solutions, by applying the operators $\ngradpar|_{pq}$ and $\ngradpar|_{qp}$ to the test function $f = \sin\left(2\pi x/L_x\right)\sin\left(2\pi y/L_y\right)\sin(z)$. The advective field $\textbf{b}$ is defined as in Eq.~\eqref{eqn:psi}. The results demonstrate second--order convergence in space, which aligns with the expected accuracy of the centered finite difference schemes used to approximate the derivatives in Eqs.~\eqref{eqn:dx_q2p} and \eqref{eqn:dx_p2q}.

To analyze the stability of the numerical solution, we solve the eigenvalue problem 
\begin{equation}
    \textbf{T}\Vec{x}=\lambda\Vec{x}, \quad \text{with }\textbf{T}=\left[\begin{array}{c|c}
\textbf{0} & \textbf{C}_{pq} \\
\hline
\textbf{C}_{qp} & \textbf{0}
\end{array}
\right]\ \text{and }\Vec{x}=\left[\begin{array}{c}
\Vec{p} \\
\Vec{q}
\end{array}\right],
    \label{eqn:eigen_probl}
\end{equation}
using the MATLAB command \texttt{eig}, as in \cite{du2013robust}. Our spatial grid is $N_x \times N_y \times N_z=16\times 16\times16$.
\begin{figure}[t]
    \centering
          \begin{subfigure}[b]{0.6\textwidth}
    \includegraphics[scale=0.3]{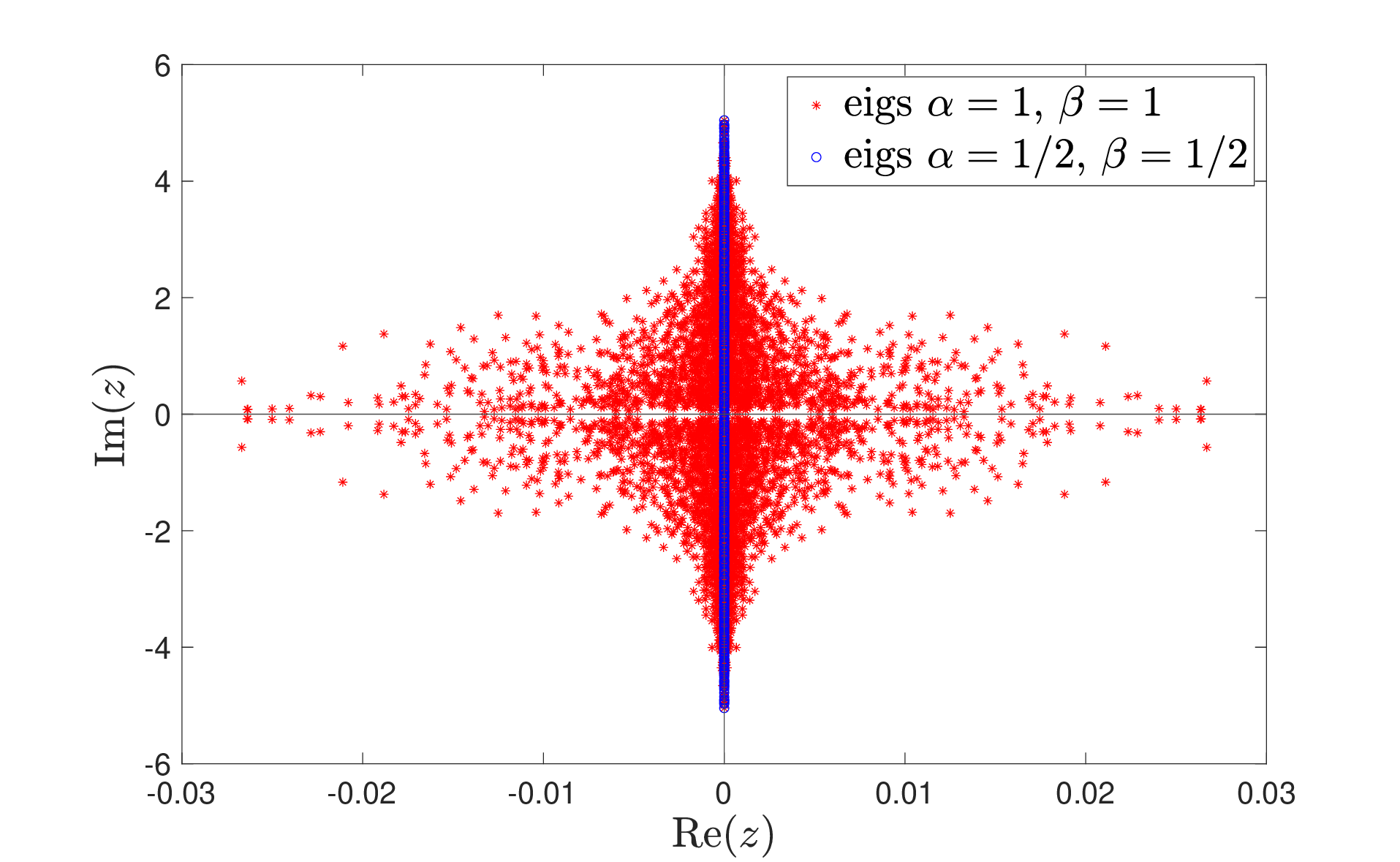}
    \end{subfigure}
    \begin{subfigure}[b]{0.35\textwidth}
    \includegraphics[scale=0.3]{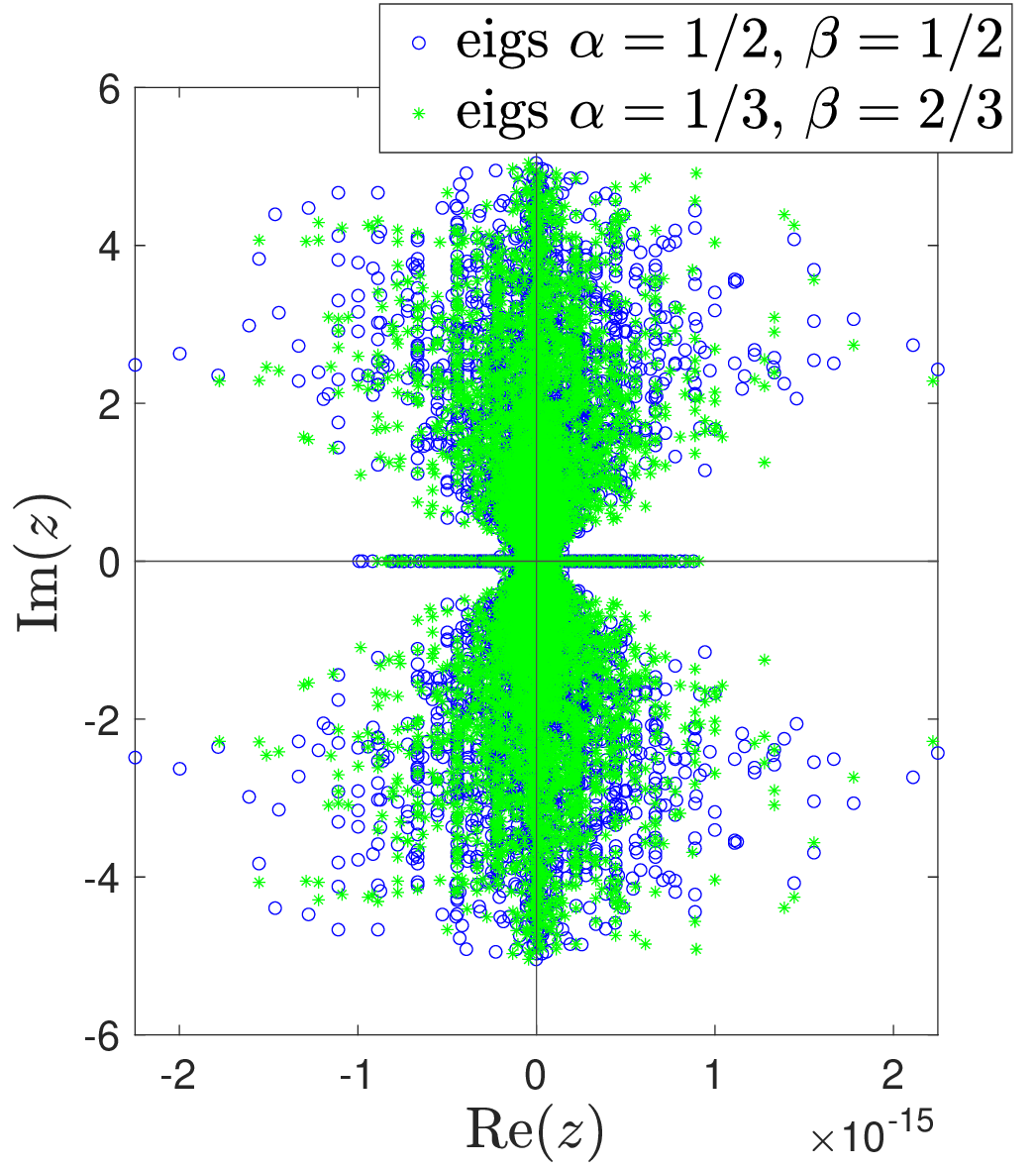}
    \end{subfigure}
    \caption{Spectrum of the eigenvalue problem  Eq.~\eqref{eqn:eigen_probl} for different values of the parameter $\alpha$ and $\beta$. On the right, the real part of the eigenvalues are equal to zero up to machine precision.}
    \label{fig:model_spectr}
\end{figure}
Fig.~\ref{fig:model_spectr} shows the spectrum's shape with two different sets of values of the parameters $\alpha$ and $\beta$. It is possible to observe that if the parameters do not satisfy the condition stated in Theorem~\ref{theo:energy}, for example, if $\alpha=\beta=1$ (red stars), eigenvalues with a real positive part appear, which implies that the energy of the system exhibits exponential growth. This instability is purely numerical, since we proved in Sec.~\ref{sec:ener_model} that energy is conserved at the continuous level. 
When $\alpha+\beta=1$, e.g. $\alpha=\beta=1/2$ (blue circles), the eigenvalues are purely imaginary, as expected from Theorem~\ref{theo:energy}. It is also possible to notice in the right plot in Fig.~\ref{fig:model_spectr} that other choices of $\alpha+\beta=1$ do not influence the spectrum of the eigenvalues.

\section{Shear Alfvén waves (SAWs)}
\label{sec:saws}
The SAWs are transverse anisotropic electromagnetic waves propagating in a magnetized plasma, \cite{chen2021physics}, \cite{stasiewicz2000small}. These waves are a stable perturbation of the electric and magnetic fields that are oriented perpendicular to each other, characterized by high frequencies with respect to the typical time scale of plasma turbulent phenomena. Indeed, when dealing with fluid simulation, \cite{zeiler1997nonlinear}, and gyrokinetic electron turbulence simulations, SAWs can impose severe limitations on the time step, affecting both the computational efficiency and stability of the simulation \cite{Jolliet:207684}.
In the subsequent sections, we employ the definitions of the parallel gradient and parallel Laplacian operator as introduced in previous sections. We assume $\nabla \cdot \textbf{b}=0$ and $\partial_t \textbf{b}=0$, being $\mathbf{b}$ the unit vector of the magnetic field in the SAWs context.

The electrostatic SAWs are described by the following system of equations, \cite{stasiewicz2000small}, \cite{Jolliet:207684}, which govern the evolution of the $\phi$ electrostatic potential and the $\vpare$ electron parallel velocity:
 \begin{equation}
\begin{alignedat}{2}
		 \pdv{ (\nlaplperp \phi)}{t} &= -\ngradpar V_{\parallel e},\\
		 \pdv{\vpare}{t} &= \zeta \ngradpar \phi +\eta \nlaplpar \vpare,
 	\end{alignedat}
  \label{eqn:saws}
\end{equation}
where $\nlaplperp$ is the perpendicular Laplacian operator, defined as $\nlaplperp: \mathbb{R}\to \mathbb{R}$ such that $\nlaplperp f= \nabla \cdot \left[(\mathbf{b}\times\boldsymbol{\nabla} f)\times \mathbf{b}\right]$. The ratio of ion and electron mass is $\zeta= \frac{m_i}{m_e}\gg1$ and $\eta \lesssim 1$. We consider homogeneous Dirichlet boundary conditions for all the variables.

We analyze in detail the dispersion relation of the electrostatic SAWs as in Eqs.~\eqref{eqn:saws}. In order to do that, we assume a perturbation of the form $\exp\left[i\left(k_y y+ k_z z-\omega t\right)\right]$ with respect to a mean field $\vpare^0$ for $\vpare$ and $\phi^0$ for $\phi$, considering the magnetic field almost parallel to the $z$ direction, as  \cite{Jolliet:207684}:
\begin{equation}
\begin{alignedat}{2}
		i\omega k_{\perp}^2 \phi^0 &= -ik_{\parallel} V_{\parallel e}^0,\\
		-i\omega\vpare^0 &= \zeta i k_{\parallel}\phi^0 -\eta k_{\parallel}^2\vpare^0,
\end{alignedat}
\end{equation}
where $k_{\parallel}$ is in the $z$ direction and $k_{\perp}$ in the $y$ direction.
%, with the coordinate system $(x, y, z)$ shown in Fig. \ref{fig:domain}.
Simplifying, we get
\begin{equation}
    \omega^2+2i\gamma_D\omega-\omega_0^2=0,
    \label{eqn:dispersion}
\end{equation}
whose solutions for $\omega$ are
\begin{equation}
    \omega=-2i\gamma_D \pm \sqrt{\omega_0^2-\gamma_D^2},
    \label{eqn:frequency}
\end{equation}
where 
%$\omega_0=\sqrt{\frac{m_i}{m_e}}\frac{k_{\parallel}}{k_{\perp}}$ 
$\omega_0=\sqrt{\zeta}k_{\parallel}/k_{\perp}$ and 
%$\gamma_D=\frac{2\eta}{3}k_{\parallel}^2$ 
$\gamma_D=\frac{\eta}{2}k_{\parallel}^2$. The real part of Eq.~\eqref{eqn:frequency} gives the SAWs oscillation frequency, while the imaginary part gives its damping rate. The parallel diffusion introduces a damping rate proportional to $\gamma_D$ and decreases the frequency from $\omega_0$ to $\sqrt{\omega_0^2-\gamma_D^2}$. 
The oscillation becomes purely damped when $\gamma_D >\omega_0$ so when $\eta>2\sqrt{\zeta}/\left(k_{\parallel}k_{\perp}\right)$. 
In principle, for small values of $\eta$, increasing the number of planes in the $z$ direction increases the possible frequencies of the SAWs. As a consequence, the required time step to accurately capture the wave behavior should decrease. However, high $k_{\parallel}$ modes can be stabilized by adding parallel diffusion (increasing the damping rate).
On the other hand, the frequency of the SAWs increases as the system size increases (leading to a decrease in the smallest $k_\perp$ value), as shown in \cite{stegmeir2023analysis}. This phenomenon can adversely affect the allowed time step size in simulations, particularly as the size of the fusion device increases.

\subsection{Energy conservation}
\label{sec:energy_cons}
Considering the system of the SAWs with parallel diffusion reported in Eqs.~\eqref{eqn:saws}, we multiply by $\phi$ and by $\vpare$ the first and second equation respectively, integrate in space both equations over $\Omega$ and summing the two equations, similarly to the steps in Sec.~\ref{sec:energy_cons}, we obtain:
\begin{equation}
\begin{alignedat}{2}
         \int_{\Omega}\frac{1}{2}\frac{\partial }{\partial t} \left(\frac{1}{\zeta} \vpare^2+(\boldsymbol{\ngradperp} \phi)^2\right) \ d\Omega=& \int_{\Omega} \ngradpar \left( \phi \vpare \right)d \Omega - \\&\frac{\eta}{\zeta} \int_{\Omega} \boldsymbol{\nabla} \vpare \cdot \left(\textbf{b}\textbf{b}^T \boldsymbol{\nabla}\vpare \right)d\Omega.
     \label{eqn:energy_conse_saw}
     \end{alignedat}
\end{equation}
The inclusion of the parallel Laplacian in the equations induces a dissipative effect on the energy, which arises because the matrix $\textbf{b}\textbf{b}^T$ is symmetric positive semi--definite.
We observe that the first term on the right--hand side of Eq.~\eqref{eqn:energy_conse_saw} is the same as the one we found in the energy conservation of the wave model problem, Eq.~\eqref{eqn:conservation_model}. Consequently, Theorem~\ref{theo:energy} remains applicable in this context, with the exception that the energy in this system $E_s$ is defined differently and is dissipated due to the presence of the parallel Laplacian on the right--hand side.
Considering that homogeneous Dirichlet BCs are applied to both fields, we prove that the energy of the SAWs with parallel diffusion is dissipated in time; that is 
\begin{equation}
    \frac{\partial E_s }{\partial t}=\frac{\partial }{\partial t}\int_{\Omega}\frac{1}{2} \left(\frac{1}{\zeta} \vpare^2+(\boldsymbol{\ngradperp} \phi)^2\right) \ d\Omega\leq0.
    \label{eqn:energy_conservation}
\end{equation}

\subsection{Modeling the magnetic field for a Tokamak Configuration}
An important challenge from a numerical modeling point of view is the fact that the space scale of the phenomena happening in the direction parallel to the magnetic field is much longer compared to the one in the perpendicular direction. To handle the strong anisotropy between the parallel and perpendicular direction to the equilibrium magnetic field, an important characteristic in modeling the plasma dynamics, we introduce a magnetic field that is almost parallel to the $z$ coordinate and has small components along the $x$ and $y$ coordinates. 
Hence, we define the magnetic field $\textbf{b}$ as
\begin{equation}
    \Tilde{\textbf{b}}=-\textbf{e}_z-\varepsilon(\textbf{e}_z \cross \nabla \Psi),
    \label{eqn:saws_mg}
\end{equation}
where $\textbf{e}_z$ is the unit vector in the direction of the $z$ axis and $\Psi$ is a flux function with the form defined in Eq.~\eqref{eqn:psi} and $\varepsilon\ll 1$. It is important to notice that the above definition of the magnetic field satisfies Gauss's law. We take the function $\Psi$ to be similar to the shape of the magnetic field in a tokamak device with a lower--single null divertor configuration, as shown in Fig.~\ref{fig:quiver} and in Fig.~\ref{fig:contour} following Eq.~\eqref{eqn:saws_mg}. 
\begin{figure}[t]
    \centering
    \includegraphics[scale=0.37]{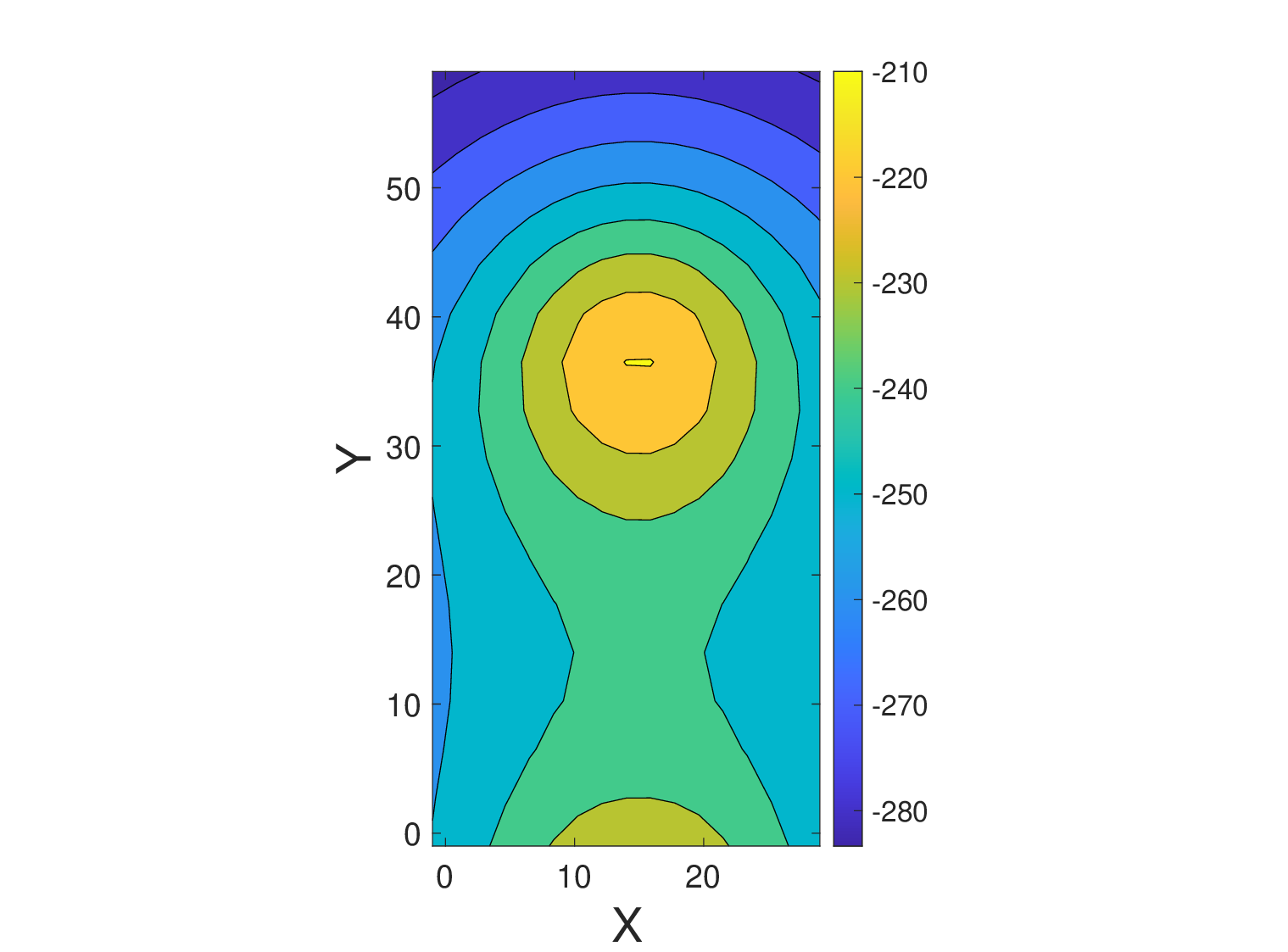}
    \caption{The contour plot of the function $\Psi$ in Eq.~\eqref{eqn:saws_mg}}
    \label{fig:contour}
\end{figure}
Moreover, the dynamics along the direction of the magnetic field are much faster compared to the phenomena happening in the perpendicular direction. As a result, the $x$ and $y$ coordinates are normalized using the ratio of the characteristic lengths $\varepsilon = h_{\perp} / h_{\parallel}$, where $h_{\parallel} \gg h_{\perp}$. Consequently, the dimensionless form of the gradient is defined as
\begin{equation}
    \widetilde{\nabla} f=\left[\frac{1}{\varepsilon
}\pdv{f}{x},\frac{1}{\varepsilon
}\pdv{f}{y},\pdv{f}{z}\right]^T
\end{equation}
and the definition of the parallel gradient is
\begin{equation}
    \widetilde{\nabla}_\parallel f=\textbf{b}\cdot \widetilde{\nabla}f,
    \label{eqn:saws_ngradpar}
\end{equation}
where $\textbf{b}$ is defined in Eq.~\eqref{eqn:saws_mg}.
Due to these scaling factors, the definition of the perpendicular Laplacian is equivalent to
\begin{equation}
\Tilde{\nlaplperp} f= \frac{\partial^2 f}{\partial x^2}+\frac{\partial^2 f}{\partial y^2} + \mathcal{O}(\varepsilon).
\label{eqn:perp_grad2}
\end{equation}
It is important to note that all calculations reported in Sec.~\ref{sec:energy_cons}, and consequently the final energy balance in Eq.~\eqref{eqn:energy_conservation}, can be performed using either the exact definition of the perpendicular Laplacian in Sec.~\ref{sec:saws} or the definition given in Eq.~\eqref{eqn:perp_grad2}, neglecting  $\mathcal{O}(\epsilon)$ and incurring an error proportional to $\mathcal{O}(\epsilon^2) $ in the energy computation.
In the following sections, we use dimensionless definitions for the operators neglecting the terms of order greater or equal than $\epsilon$ and we omit the tilde symbol for simplicity and readability: $\nlaplperp f= \partial_x^2f+ \partial_y^2f$, $\ngradpar f=\partial_y \Psi \partial_x f-\partial_x \Psi \partial_y f -\partial_z f$.

\subsection{Matrix formulation and discretization of the SAWs}
Starting from  Eqs.~\eqref{eqn:saws} and considering homogeneous Dirichlet boundary conditions for both field, we can write the matrix formulation of this system as
\begin{equation}
    \frac{d}{dt}
\underbrace{\left[
\begin{array}{c|c}
-\nlaplperp & \textbf{0} \\
\hline
\textbf{0} & \textbf{I}  \\
\end{array}
\right]}_{\mathbf{M}}\left[\begin{array}{c}
\Vec{\phi} \\
\Vec{\vpare}
\end{array}\right]
=\underbrace{\left[\begin{array}{c|c}
\textbf{0} & \ngradpar\vert_{pq} \\
\hline
\zeta\ngradpar\vert_{qp} & \eta\ngradpar^2  
\end{array}
\right]}_{\mathbf{D}}\left[\begin{array}{c}
\Vec{\phi} \\
\Vec{\vpare}
\end{array}\right].
\end{equation}
Since the mass matrix $\mathbf{M}$ is constant in time, we can write 
\begin{equation} 
\left[
\begin{array}{c|c}
-\ngradperp^2 & \textbf{0} \\
\hline
\textbf{0} & \textbf{I}  \\
\end{array}
\right]\frac{d}{dt}\left[\begin{array}{c}
\Vec{\phi} \\
\Vec{\vpare}
\end{array}\right]
=\left[\begin{array}{c|c}
\textbf{0} & \ngradpar\vert_{pq} \\
\hline
\zeta\ngradpar\vert_{qp} & \eta\ngradpar^2  
\end{array}
\right]\left[\begin{array}{c}
\Vec{\phi} \\
\Vec{\vpare}
\end{array}\right].
\label{eqn:algebraic_formulation}
\end{equation}
We choose to evaluate $\vpare$ on the $q$-grid and the $\phi$ on the $p$-grid. The operators $\ngradpar|_{pq}$ and $\ngradpar|_{qp}$ are discretized according to Eqs.~\eqref{eqn:weigter_ngradpar_pq} and \eqref{eqn:weigter_ngradpar_qp}. For the discretization of $\nlaplpar$, we adopt the second-order accurate symmetric scheme proposed in \cite{GUNTER2005354}, which employs a $3 \times 3 \times 3$ box stencil in three dimensions. To evaluate the parallel Laplacian of $\vpare$ at interior points located near the boundary, star--shaped $q$-grid points are introduced on the physical boundary (red line), as shown in Fig.~\ref{fig:ghosts}.
The scheme reported in \cite{GUNTER2005354} discretizes the parallel Laplacian as $\nabla \cdot (\textbf{b}\textbf{b}^T \nabla)$ with central finite differences: 
the gradient on the $q$-grid is first computed between adjacent points, yielding values on the $p$-grid where $\textbf{b}\textbf{b}^T$ is evaluated; central finite differences are then applied to discretize the divergence, mapping the result back onto the original $q$-grid. 
An adjustment is required for interior $q$-grid points near the boundary to maintain accuracy: in these cases, as shown in Fig.~\ref{fig:ghosts}, the adjacent points lying on the boundary are separated by half the usual grid spacing. Consequently, the scheme no longer relies on central finite differences, but instead accounts for the non-uniform spacing between points.
\begin{figure}[t]
    \centering   \includegraphics[scale=0.5]{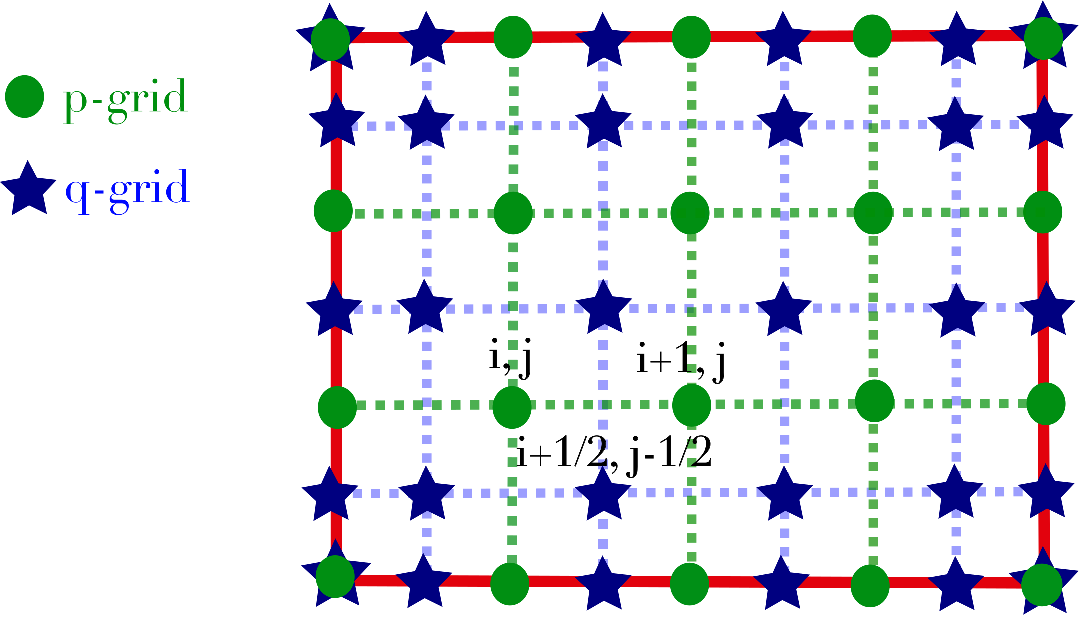}
    %{}
    \caption{Sketch of the two grids in a two dimensional setting where the red line represents the physical domain, the green dashed line the $p$-grid with the green points as nodes and the blue dashed line describes the $q$-grid with blue stars as nodes. The addition of $q$ nodes on the physical boundary is necessary to compute the parallel Laplacian operator.}
    \label{fig:ghosts}
\end{figure}
%The sketch of the two grids in a two--dimensional space is shown in Fig.~\ref{fig:ghosts}.
The perpendicular Laplacian, expressed as the sum of second--order derivatives in the $x$ and $y$ directions, is discretized using second--order accurate central finite difference schemes. Both the parallel and perpendicular Laplacian operators are defined such that their input and output reside on the same computational grid.

\subsection{Numerical Simulation of SAWs system}
%\subsection{Spectrum of the generalized eigenvalue problem}
To find the eigenvalues of this system, we solve the generalized eigenvalue problem 
\begin{equation}
    \textbf{D}\Vec{v}=\lambda \textbf{M}\Vec{v} \quad \text{with } \Vec{v}=\left[\begin{array}{c}
\Vec{\phi} \\
\Vec{\vpare}
\end{array}\right],
    \label{eqn:generalized}
\end{equation}
where $\Vec{\phi}$ and $\Vec{\vpare}$ are the vectors associated with the scalar field following the convention introduced in Eq.~\eqref{eqn:vecform} and $\textbf{M}$ is a symmetric positive definitive matrix, considering the application of the boundary conditions.
In this numerical test, we consider $\zeta=2500$ and $\eta=\frac{4}{3}$.
By solving the generalized eigenvalue problem given by Eq.~\eqref{eqn:generalized}, to estimate the stability of the differential algebraic equation, we obtain the spectrum displayed in Fig.~\ref{fig:spectrumold}.
As in the previous cases, when the values of $\alpha$ and $\beta$ satisfy Theorem~\ref{theo:energy}, the spectrum of the system presents only eigenvalues with real negative part. 
Despite the presence of diffusion in the system described by Eqs.~\eqref{eqn:saws}, the module of the imaginary component of the eigenvalues is notably larger than the real component. This indicates that when using explicit time discretizations, the constraint on the time step is predominantly due to the imaginary part of the eigenvalues. 
 \begin{figure}[t]
     \centering
     %\hspace{-5mm}
     \begin{subfigure}{0.56\textwidth}
        \includegraphics[scale=0.3]{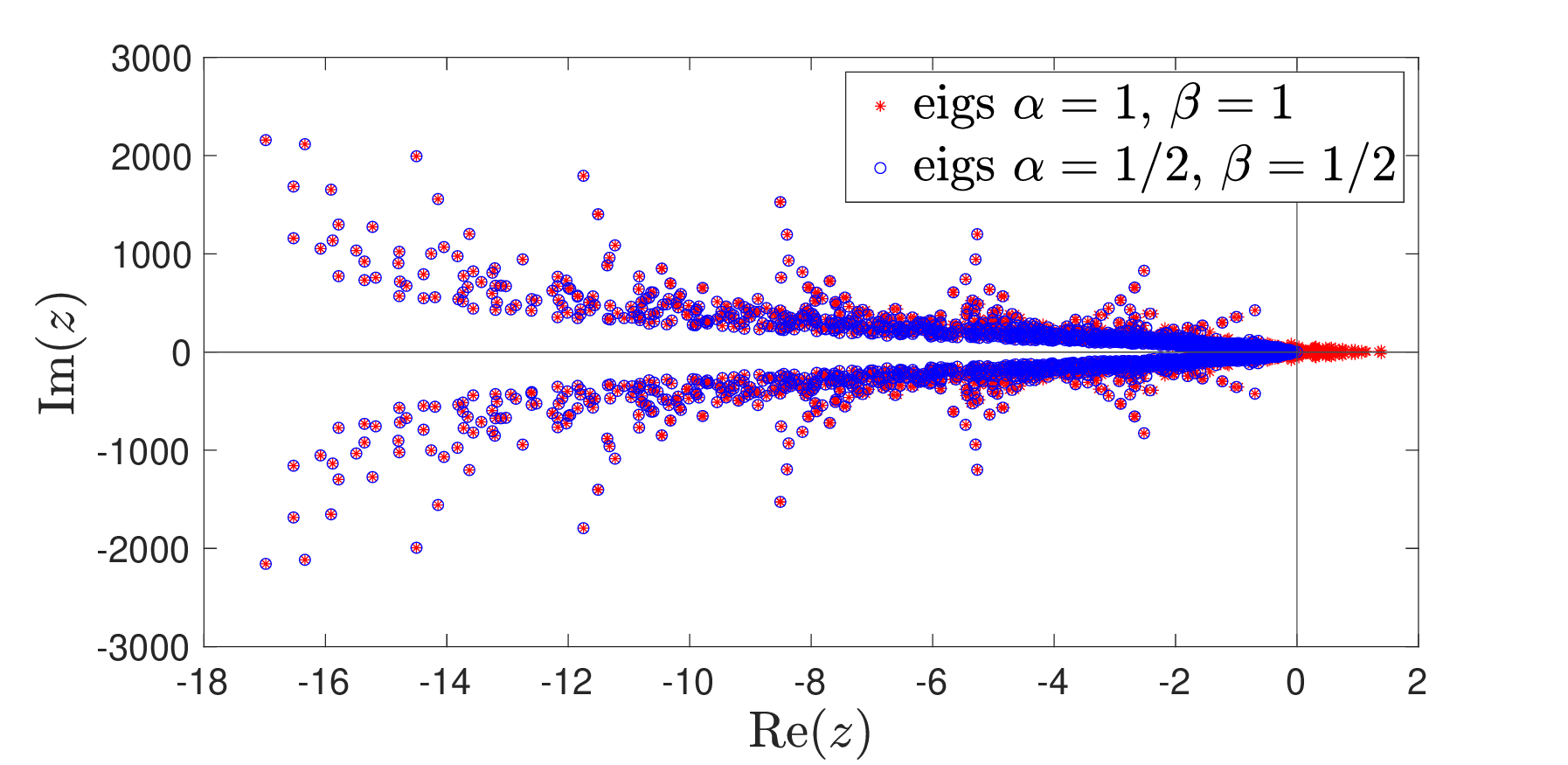}
     \end{subfigure}
     \begin{subfigure}{0.43\textwidth}
         \includegraphics[scale=0.3]{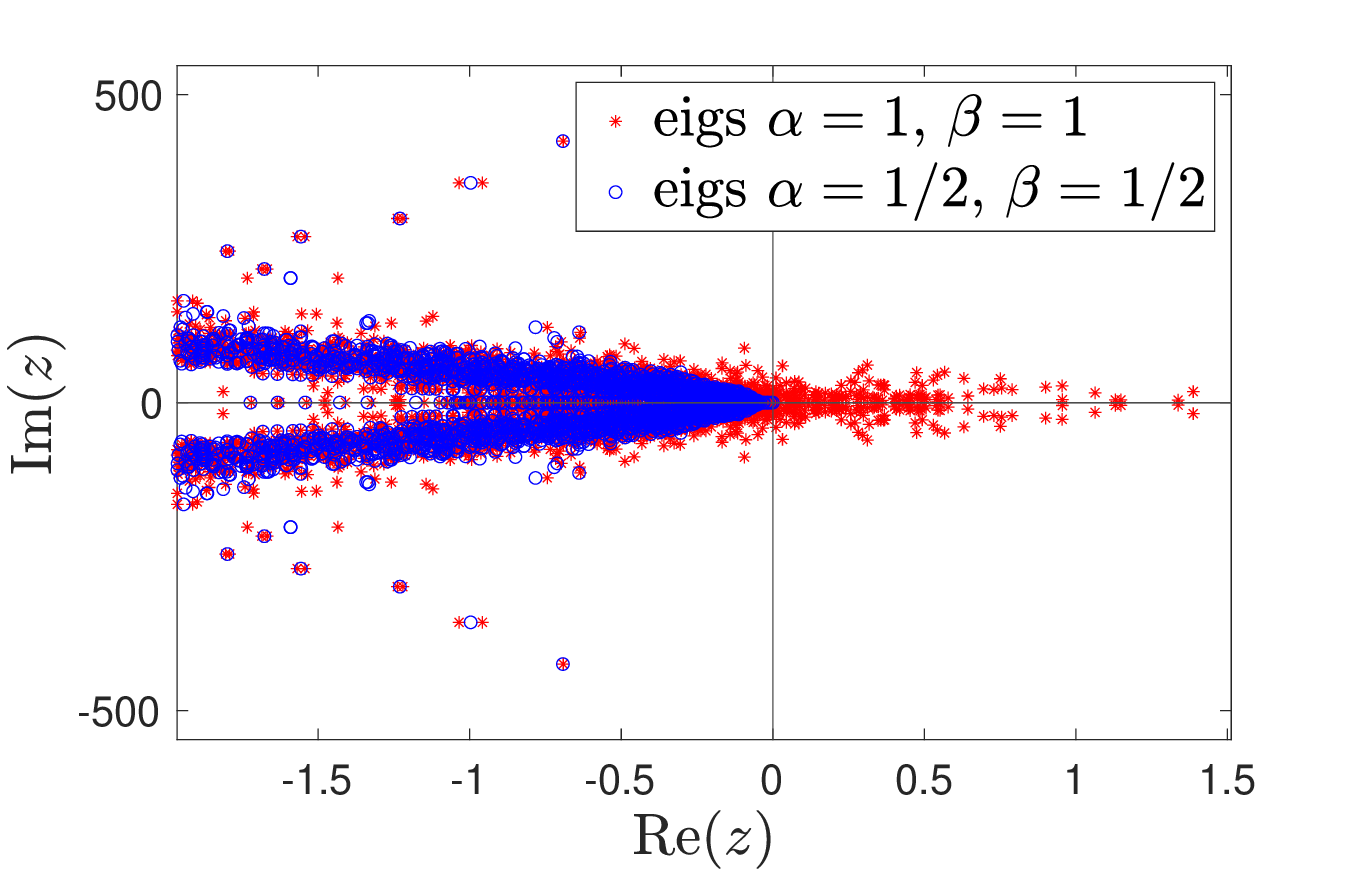}
     \end{subfigure}
     \caption{Spectrum of the generalized eigenvalue problem Eq.~\eqref{eqn:generalized}, for different values of $\alpha$ and $\beta$.}
     \label{fig:spectrumold}
 \end{figure}
\begin{figure}[t]
  \begin{subfigure}{0.48\textwidth}
    \centering
    \includegraphics[width=\textwidth]{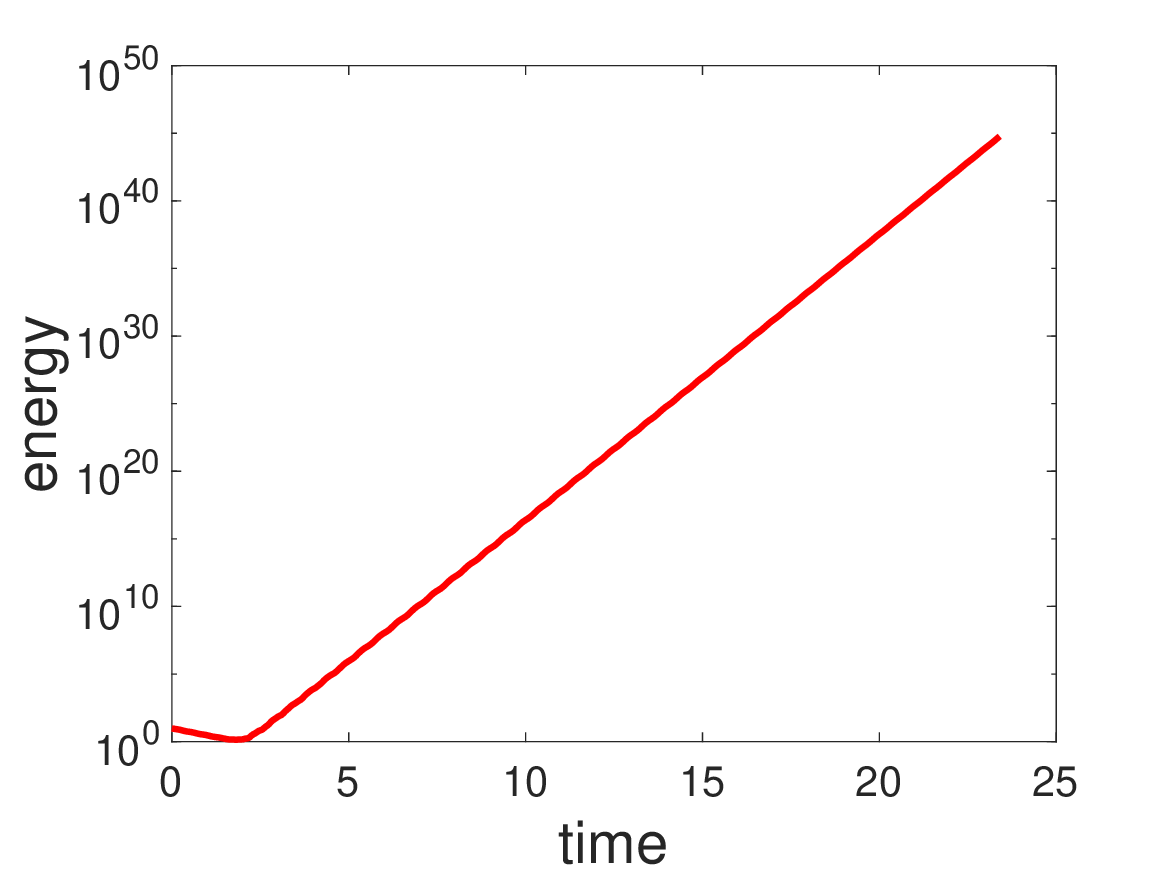}
    \caption{}
    \label{fig:energy_old}
    \end{subfigure}
    \begin{subfigure}{0.48\textwidth}
        \centering
    \includegraphics[width=\textwidth]{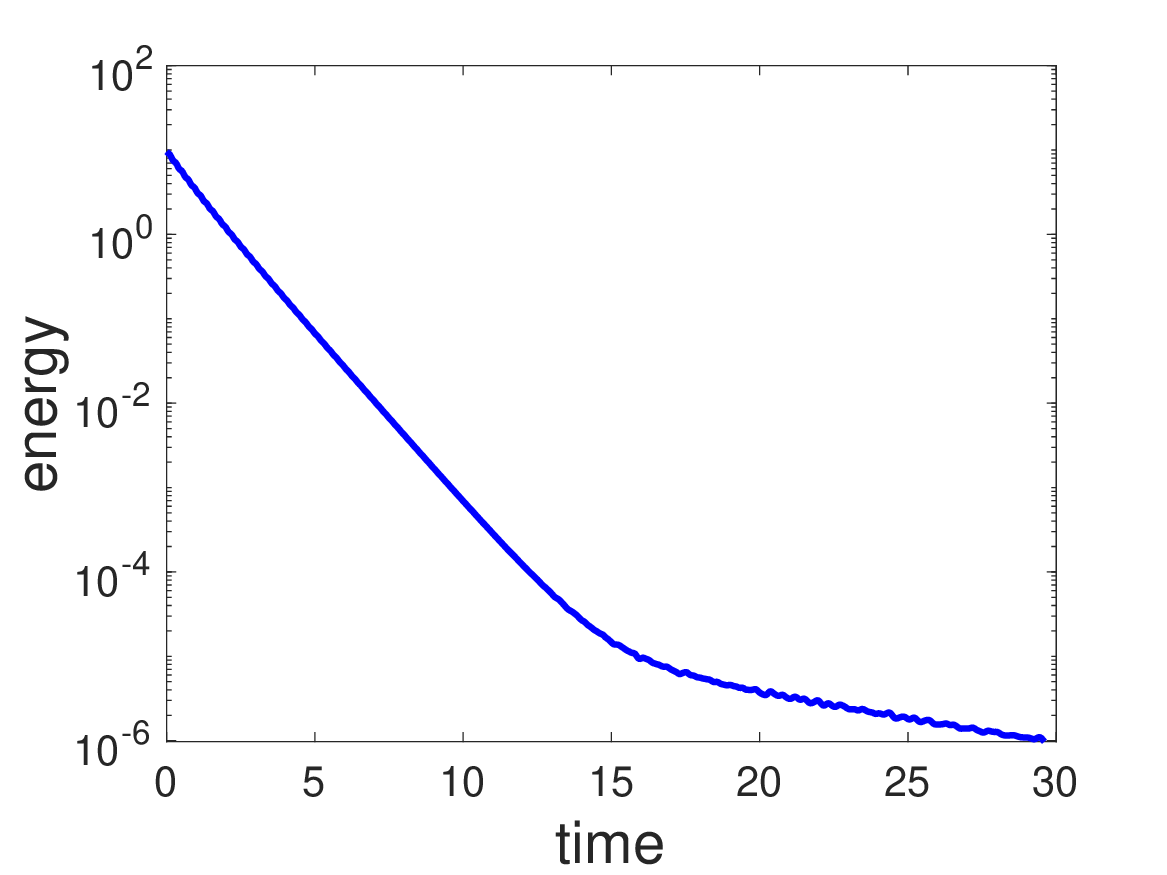}
    \caption{}
    \label{fig:energy_new}
    \end{subfigure}
    \caption{Semi--logarithmic plot of the energy versus time. The discretization used is $N_x\times N_y\times N_z=16\times 16\times16$. Fig.~\ref{fig:energy_old} shows the energy profile using $\alpha=1$ and $\beta=1$ in the formula of the parallel gradient operator instead Fig.~\ref{fig:energy_new} shows the profile of the energy when $\alpha=1/2$ and $\beta=1/2$}
    \label{fig:energy}
\end{figure}

We analyze the evolution of the energy profile for different values of $\alpha$ and $\beta$, as reported in Fig.~\ref{fig:energy}. 

Since the system presented in Eqs.~\eqref{eqn:saws} includes a diffusive term on the right--hand side and, therefore the energy is dissipated in time, we employ a standard fourth--order Runge–Kutta (RK4) method for time integration. To ensure numerical stability, we select a time step $\Delta t = 10^{-3}$ such that all eigenvalues with a negative real part lie within the RK4 stability region.

We note that in scenarios where the parallel diffusion term is absent and energy conservation is critical, the Crank--Nicolson method offers a viable alternative due to its unconditional stability and its ability to preserve a discretized version of the system’s energy, as shown in \ref{app2}. However, the Crank--Nicolson algorithm is known to introduce spurious oscillations, particularly when applied to problems with sharp gradients or discontinuities, \cite{osterby2003five}. These oscillations arise from the method's dispersive properties, leading to multiple modes propagating at different speeds. To mitigate this issue while preserving wave characteristics, low--dissipation and low--dispersion Runge–Kutta (LDLDRK) schemes, such as those proposed by \cite{hu1996low}, can be considered as alternatives to the classical RK4 method.
It is evident from Fig.~\ref{fig:energy_old} that when employing values of $\alpha$ and $\beta$ not satisfying Theorem~\ref{theo:energy}, the energy value exhibits exponential growth in time. In contrast, the energy exponentially decreases to zero with the energy--preserving implementation of the parallel gradient, as shown in Fig.~\ref{fig:energy_new}. In this figure, we present a specific case with $\alpha=1/2$ and $\beta=1/2$. However, we observe the same behavior for all values of $\alpha$ and $\beta$ satisfying Theorem~\ref{theo:energy}.
\begin{figure}
    \centering
    \includegraphics[width=0.8\linewidth]{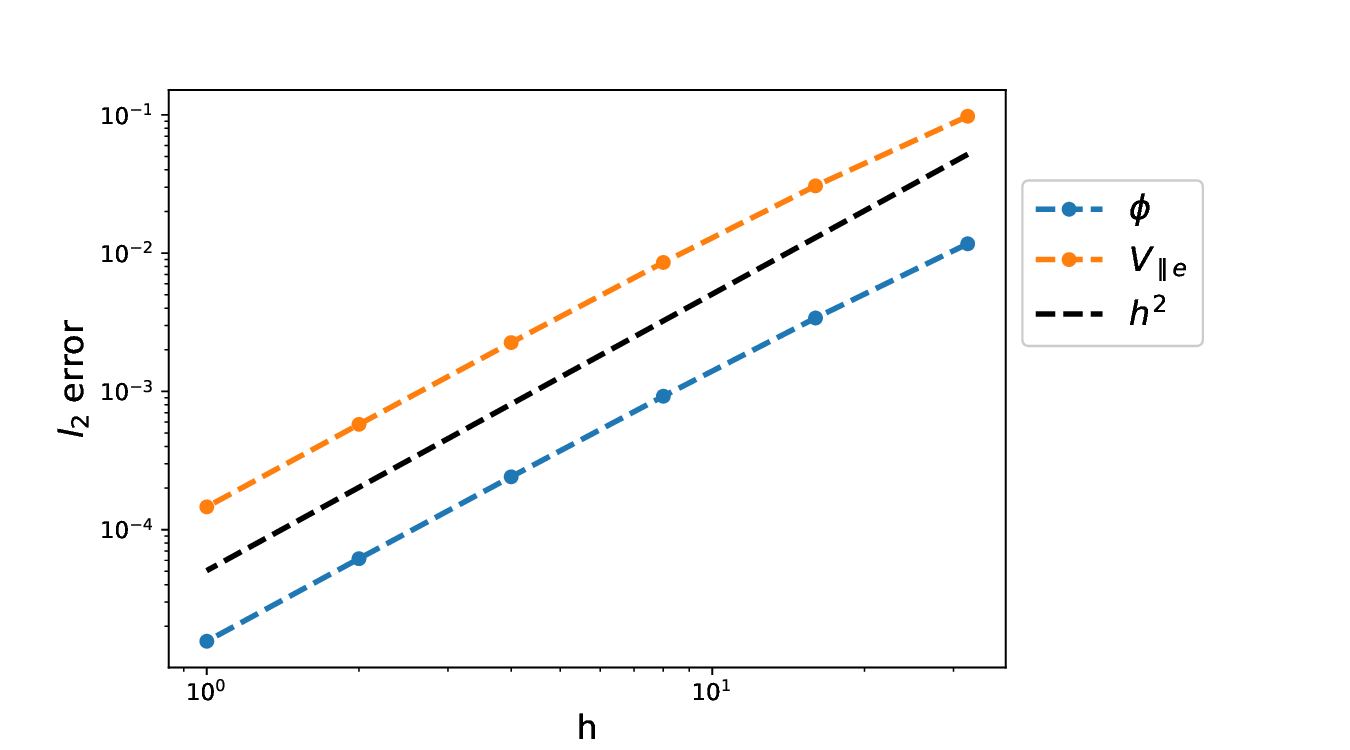}
    \caption{Log--log plot of the $L_2$ error convergence for the variables $\phi$ and $V_{\parallel e}$. The test is performed using the advective field $\textbf{b}$ as defined in Eq.~\eqref{eqn:psi}, with uniform spatial resolution such that $h = \frac{\Delta x}{\Delta x_0} \times \frac{\Delta y}{\Delta y_0}\times \frac{\Delta z}{\Delta z_0}$ where $\Delta x_0$, $\Delta y_0$ and $\Delta z_0$ are the grid spacings for $N_x\times N_y\times N_y= 256\times 256\times 256$. In the $\ngradpar |_{pq}$ and $\ngradpar |_{qp}$, $\alpha$ and $\beta$ are chosen to be equal to $1/2$.}
    \label{fig:convergence_saw}
\end{figure}

Finally, we perform a numerical test to evaluate the order of accuracy of the solution using the Method of Manufactured Solutions (MMS) \cite{riva2014verification}. The imposed analytical solutions are provided in \ref{app1}. We investigate spatial convergence by fixing the time step and the final simulation time to $\Delta t=1.25 \times 10^{-5}$ and $T=0.002$, respectively. Since all differential operators are discretized using second--order accurate finite difference schemes, the discretization error is expected to decrease quadratically with the grid spacing. This expectation is confirmed by the results in Fig.~\ref{fig:convergence_saw}, which demonstrate second--order convergence in the $l_2$ norm for both $\phi $ and $\vpare$.

\section{Conclusions}
\label{sec:concl}
In this work, we introduce a novel mimetic finite difference (MFD) scheme for the advective term with divergence--free advective field, designed for staggered grids. The proposed discretization of the parallel gradient operator ensures that the divergence theorem is preserved at the discrete level, under the assumption of homogeneous Dirichlet boundary conditions. 

The method leverages the divergence--free property of the advective field, an intrinsic characteristic of the magnetic fields and divergence--free velocity fields, such as those encountered in the convective term of the Navier-Stokes equations. By exploiting this feature, the parallel gradient operator is reformulated as a weighted average of the advective operator $\mathbf{b}\cdot \boldsymbol{\nabla}\bullet$ and the divergence operator $\nabla \cdot \left(\mathbf{b}\bullet\right)$. This approach aligns with the skew--symmetric formulation proposed in \cite{morinishi2010skew}. 

At the continuous level, this reformulation is mathematically equivalent to the original equations, preserving the physical and mathematical properties of the system. However, at the discrete level, it introduces a significant advantage: it ensures energy conservation within the numerical scheme. This conservation enhances both the stability and physical fidelity of the simulations, allowing the discrete approximation to better reflect the underlying physics of the system.

The stability of the method is validated through its application to a wave--like model problem and a system representing the shear Alfvén waves (SAWs). This validation involves computing the spectrum of the generalized eigenvalue problem, following the methodology outlined in \cite{du2013robust}. The analysis demonstrates that the new discretization method successfully preserves energy, which has significant implications for improving the accuracy and robustness of various numerical simulations, particularly those used in fluid plasma codes. 

By ensuring energy conservation, the method not only enhances the physical fidelity of the simulations but also mitigates numerical artifacts, a crucial consideration in modeling complex plasma dynamics.
Future work may explore extending this approach to more complex boundary conditions and multi--dimensional systems, further enhancing its applicability and impact.

%% The Appendices part is started with the command \appendix;
%% appendix sections are then done as normal sections
\section*{Data Availability Statement}
The data that support the findings of this study are available upon reasonable request from the authors.

\section*{Acknowledgements}
This work has been carried out within the framework of the EUROfusion Consortium, partially funded by the European Union via the Euratom Research and Training Programme (Grant Agreement No 101052200 — EUROfusion). The Swiss contribution to this work has been funded by the Swiss State Secretariat for Education, Research and Innovation (SERI). Views and opinions expressed are however those of the author(s) only and do not necessarily reflect those of the European Union, the European Commission, or SERI. Neither the European Union nor the European Commission nor SERI can be held responsible for them.
\appendix
\section{Parameters in the test cases}
\label{app1}
Here we report the parameters used to define $\Psi$:
\begin{align*}
    A_{mag}=&\frac{25L_x}{12}L_y\\
    x_{mag}=&\frac{L_x}{2}\\
    y_{mag1}=&\frac{5}{8}(y_{max}-y_{min})\\
    y_{mag2}=&18\left(\frac{y_{max}-y_{min}}{40}\right)-y_{mag1}\\
    a_s=&\frac{5}{40}(y_{max}-y_{min})&
\end{align*}
where $L_x$ and $L_y$ are the sizes of the domain in $x$ and $y$ that in our simulations are $L_x=30$ and $L_y=60$. 

Here we report the solutions imposed in the MMS to test the convergence of the variables in the  SAWs system:
\begin{equation*}
\begin{alignedat}{2}
    \phi^s(x,y,z)=&3.1+0.8\sin(y+0.1)\sin(z)\sin(x+t)\\
    \vpare^s(x,y,z)=&\sin(y+0.2)\sin(z+0.2)\sin(x+t+0.2)
\end{alignedat}
\end{equation*}

\section{Discretized energy conservation}
\label{app2}
The leap--frog method, a widely used multi--step approach for solving the wave equation, has second--order accuracy, and stability is not guaranteed with larger time steps. Energy--conserving methods, like theta methods, have been introduced to improve accuracy while maintaining conservation. On the other hand, the standard Crank--Nicolson method preserves conservation laws but only offers second--order accuracy.
If we apply the Crank--Nicolson scheme to the Eq.~\eqref{eqn:algebraic_formulation} without parallel diffusion, so considering a pure wave problem, we get:
\begin{equation*}
\begin{alignedat}{2}
    \Tilde{\textbf{M}}\frac{\Vec{v}^{n+1}-\Vec{v}^n}{\Delta t}=&\Tilde{\textbf{D}}\frac{\Vec{v}^{n+1}+\Vec{v}^{n}}{2} \\ \text{where } \Tilde{\textbf{M}}=\left[\begin{array}{c|c}
-\nlaplperp & \textbf{0} \\
\hline
\textbf{0} & \frac{1}{\zeta}\textbf{I}  \\
\end{array}
\right]\ &\text{ and }\Tilde{\textbf{D}}=\left[\begin{array}{c|c}
\textbf{0} & \ngradpar\vert_{pq} \\
\hline
\ngradpar\vert_{qp} & \textbf{0}
\end{array}
\right].
\end{alignedat}
\end{equation*}
It is possible to multiply the previous equation for $(\Vec{v}^{n+1}+\Vec{v}^{n})^T/2\mathbf{X}$ to obtain:
\begin{equation}
\begin{alignedat}{2}
\frac{(\Vec{v}^{n+1}+\Vec{v}^{n})^T}{2}\mathbf{X}\Tilde{\textbf{M}}\frac{\Vec{v}^{n+1}-\Vec{v}^p}{\Delta t}=&\frac{(\Vec{v}^{n+1}+\Vec{v}^{n})^T}{2}\mathbf{X}\Tilde{\textbf{D}}\frac{\Vec{v}^{n+1}+\Vec{v}^{n}}{2} \\ \text{where }\mathbf{X}=&\left[\begin{array}{c|c}
\textbf{X}_p & \textbf{0} \\
\hline
\textbf{0} & \textbf{X}_q
\end{array}
\right]
\end{alignedat}
\label{eqn:dd3}
\end{equation}
The rhs of the Eq.~\eqref{eqn:dd3} for the interior point (so we can neglect the matrix norm), vanishes since
\begin{equation}
\begin{alignedat}{2}
    \left[
\begin{array}{c}
\Vec{\phi}^{n+\frac{1}{2}} \\
\Vec{\vpare}^{w}
\end{array}\right]^T\Tilde{\textbf{D}}\left[
\begin{array}{c}
\Vec{\phi}^{n+\frac{1}{2}} \\
\Vec{\vpare}^{n+\frac{1}{2}}
\end{array}\right]=  
&(\Vec{\phi}^{n+\frac{1}{2}})^T\textbf{C}|_{pq}\Vec{\vpare}^{n+\frac{1}{2}} -(\Vec{\vpare}^{n+\frac{1}{2}})^T\textbf{C}|_{pq}^T \Vec{\phi}^{n+\frac{1}{2}}\\=&(\Vec{\phi}^{n+\frac{1}{2}})^T\textbf{C}|_{pq}\Vec{\vpare}^{n+\frac{1}{2}} -\left((\Vec{\phi}^{n+\frac{1}{2}})^T\textbf{C}|_{pq} \Vec{\vpare}^{n+\frac{1}{2}}\right)^T\\=&0,
\label{eqn:rhs_is0}
\end{alignedat}
\end{equation}
where $\Vec{\phi}^{n+\frac{1}{2}}=\left(\Vec{\phi}^{n+1}+ \Vec{\phi}^n\right)/2$ and $\Vec{\vpare}^{n+\frac{1}{2}}=\left(\Vec{\vpare}^{n+1}+\Vec{\vpare}^n\right)/2$. For the points at the boundary, the previous expression vanishes for the imposed boundary conditions.
The left--hand side express the discretized energy is conserved since $\textbf{C}|_{qp}=-\textbf{C}|_{pq}^T$:
\begin{equation}
\begin{alignedat}{2}
\left(\frac{\Vec{v}^{n+1}+\Vec{v}^{n}}{2}\right)^T\textbf{X}\Tilde{\textbf{M}}\frac{\Vec{v}^{n+1}-\Vec{v}^n}{\Delta t}=& \left(\frac{\Vec{\phi}^{n+1}+\Vec{\phi}^n}{2}\right)^T\textbf{X}_p\left(-\nlaplperp\left(\frac{\Vec{\phi}^{n+1}-\Vec{\phi}^n}{\Delta t}\right)\right)\\+& \left(\frac{\Vec{\vpare}^{n+1}+\Vec{\vpare}^n}{2}\right)^T\textbf{X}_q\left(\frac{\Vec{\vpare}^{n+1}-\Vec{\vpare}^n}{\Delta t}\right).
\end{alignedat}
\label{eqn:dd4}
\end{equation}
The energy conserved in the continuous setting, as reported in Eq.~\eqref{eqn:energy_conservation}, is equivalent to the following expression:
\begin{equation}
\frac{\partial E_s}{\partial t} = \int_{\Omega} \vpare \pdv{\vpare}{t} \  d\Omega -\int_{\Omega} \pdv{(\ngradperp^2 \phi)}{t} \phi  \ d\Omega.
\end{equation}
This represents the continuous form of the derivative in time of the quantity we conserve in the discrete settings, as shown in Eq.~\eqref{eqn:dd4}. In conclusion, by employing the Crank--Nicolson method for time integration alongside our MFD scheme for the parallel gradient operator, we successfully conserve the discrete version of the energy in time.

\bibliographystyle{unsrt}  
  \bibliography{biblio1}

\begin{thebibliography}{10}

\bibitem{patankar2018numerical}
Suhas Patankar.
\newblock {\em Numerical heat transfer and fluid flow}.
\newblock CRC press, 2018.

\bibitem{durran2010numerical}
Dale~R Durran.
\newblock {\em Numerical methods for fluid dynamics: With applications to geophysics}, volume~32.
\newblock Springer Science \& Business Media, 2010.

\bibitem{durran2013numerical}
Dale~R Durran.
\newblock {\em Numerical methods for wave equations in geophysical fluid dynamics}, volume~32.
\newblock Springer Science \& Business Media, 2013.

\bibitem{sharma2021introduction}
Atul Sharma.
\newblock {\em Introduction to computational fluid dynamics: development, application and analysis}.
\newblock Springer Nature, 2021.

\bibitem{yee1966numerical}
Kane Yee.
\newblock Numerical solution of initial boundary value problems involving maxwell's equations in isotropic media.
\newblock {\em IEEE Transactions on antennas and propagation}, 14(3):302--307, 1966.

\bibitem{lipnikov2014mimetic}
Konstantin Lipnikov, Gianmarco Manzini, and Mikhail Shashkov.
\newblock Mimetic finite difference method.
\newblock {\em Journal of Computational Physics}, 257:1163--1227, 2014.

\bibitem{arakawa1981potential}
Akio Arakawa and Vivian~R Lamb.
\newblock A potential enstrophy and energy conserving scheme for the shallow water equations.
\newblock {\em Monthly Weather Review}, 109(1):18--36, 1981.

\bibitem{arakawa1977computational}
Akio Arakawa and Vivian~R Lamb.
\newblock Computational design of the basic dynamical processes of the ucla general circulation model.
\newblock {\em General circulation models of the atmosphere}, 17(Supplement C):173--265, 1977.

\bibitem{fernandez2014review}
David C Del~Rey Fern{\'a}ndez, Jason~E Hicken, and David~W Zingg.
\newblock Review of summation-by-parts operators with simultaneous approximation terms for the numerical solution of partial differential equations.
\newblock {\em Computers \& Fluids}, 95:171--196, 2014.

\bibitem{svard2014review}
Magnus Sv{\"a}rd and Jan Nordstr{\"o}m.
\newblock Review of summation-by-parts schemes for initial--boundary-value problems.
\newblock {\em Journal of Computational Physics}, 268:17--38, 2014.

\bibitem{o2017energy}
Ossian O'Reilly, Tomas Lundquist, Eric~M Dunham, and Jan Nordstr{\"o}m.
\newblock Energy stable and high-order-accurate finite difference methods on staggered grids.
\newblock {\em Journal of Computational Physics}, 346:572--589, 2017.

\bibitem{garbet2010gyrokinetic}
Xavier Garbet, Yasuhiro Idomura, Laurent Villard, and TH~Watanabe.
\newblock Gyrokinetic simulations of turbulent transport.
\newblock {\em Nuclear Fusion}, 50(4):043002, 2010.

\bibitem{stegmeir2023analysis}
A~Stegmeir, T~Body, and W~Zholobenko.
\newblock Analysis of locally-aligned and non-aligned discretisation schemes for reactor-scale tokamak edge turbulence simulations.
\newblock {\em Computer Physics Communications}, 290:108801, 2023.

\bibitem{umansky2009status}
MV~Umansky, XQ~Xu, Ben Dudson, LL~LoDestro, and JR~Myra.
\newblock Status and verification of edge plasma turbulence code bout.
\newblock {\em Computer Physics Communications}, 180(6):887--903, 2009.

\bibitem{giacomin2022gbs}
Maurizio Giacomin, Paolo Ricci, A~Coroado, G~Fourestey, Davide Galassi, Emmanuel Lanti, D~Mancini, Nicolas Richart, Louis~N Stenger, and N~Varini.
\newblock The gbs code for the self-consistent simulation of plasma turbulence and kinetic neutral dynamics in the tokamak boundary.
\newblock {\em Journal of Computational Physics}, 463:111294, 2022.

\bibitem{ricci2012simulation}
P~Ricci, FD~Halpern, S~Jolliet, J~Loizu, A~Mosetto, A~Fasoli, I~Furno, and C~Theiler.
\newblock Simulation of plasma turbulence in scrape-off layer conditions: the gbs code, simulation results and code validation.
\newblock {\em Plasma Physics and Controlled Fusion}, 54(12):124047, 2012.

\bibitem{GUNTER2005354}
S.~Günter, Q.~Yu, J.~Krüger, and K.~Lackner.
\newblock Modelling of heat transport in magnetised plasmas using non-aligned coordinates.
\newblock {\em Journal of Computational Physics}, 209(1):354--370, 2005.

\bibitem{gunter2007finite}
Sibylle G{\"u}nter, Karl Lackner, and C~Tichmann.
\newblock Finite element and higher order difference formulations for modelling heat transport in magnetised plasmas.
\newblock {\em Journal of Computational Physics}, 226(2):2306--2316, 2007.

\bibitem{morinishi2010skew}
Yohei Morinishi.
\newblock Skew-symmetric form of convective terms and fully conservative finite difference schemes for variable density low-mach number flows.
\newblock {\em Journal of Computational Physics}, 229(2):276--300, 2010.

\bibitem{morinishi1998fully}
Yohei Morinishi, Thomas~S Lund, Oleg~V Vasilyev, and Parviz Moin.
\newblock Fully conservative higher order finite difference schemes for incompressible flow.
\newblock {\em Journal of computational physics}, 143(1):90--124, 1998.

\bibitem{halpern2018anti}
Federico~D Halpern and Ronald~E Waltz.
\newblock Anti-symmetric plasma moment equations with conservative discrete counterparts.
\newblock {\em Physics of Plasmas}, 25(6), 2018.

\bibitem{halpern2020anti}
Federico~D Halpern.
\newblock Anti-symmetric representation of the extended magnetohydrodynamic equations.
\newblock {\em Physics of Plasmas}, 27(4), 2020.

\bibitem{halpern2024paralleldiffusionoperatormagnetized}
Federico~D. Halpern, Min-Gu Yoo, Brendan Lyons, and Juan~Diego Colmenares.
\newblock Parallel diffusion operator for magnetized plasmas with improved spectral fidelity, 2024.

\bibitem{Jolliet:207684}
S.~Jolliet, F.D. Halpern, J.~Loizu, A.~Mosetto, F.~Riva, and P.~Ricci.
\newblock Numerical approach to the parallel gradient operator in tokamak scrape-off layer turbulence simulations and application to the gbs code.
\newblock {\em Computer Physics Communications}, 188:21--32, 2015.

\bibitem{du2013robust}
Nguyen~Huu Du, Vu~Hoang Linh, and Volker Mehrmann.
\newblock Robust stability of differential-algebraic equations.
\newblock {\em Surveys in Differential-Algebraic Equations I}, pages 63--95, 2013.

\bibitem{chen2021physics}
Liu Chen, Fulvio Zonca, and Yu~Lin.
\newblock Physics of kinetic alfv{\'e}n waves: a gyrokinetic theory approach.
\newblock {\em Reviews of Modern Plasma Physics}, 5:1--37, 2021.

\bibitem{stasiewicz2000small}
K~Stasiewicz, P~Bellan, C~Chaston, C~Kletzing, R~Lysak, J~Maggs, O~Pokhotelov, C~Seyler, P~Shukla, L~Stenflo, et~al.
\newblock Small scale alfv{\'e}nic structure in the aurora.
\newblock {\em Space Science Reviews}, 92:423--533, 2000.

\bibitem{zeiler1997nonlinear}
A~Zeiler, JF~Drake, and B~Rogers.
\newblock Nonlinear reduced braginskii equations with ion thermal dynamics in toroidal plasma.
\newblock {\em Physics of Plasmas}, 4(6):2134--2138, 1997.

\bibitem{osterby2003five}
Ole {\O}sterby.
\newblock Five ways of reducing the crank--nicolson oscillations.
\newblock {\em BIT Numerical Mathematics}, 43:811--822, 2003.

\bibitem{hu1996low}
FQ~Hu, M~Yousuff Hussaini, and JL~Manthey.
\newblock Low-dissipation and low-dispersion runge--kutta schemes for computational acoustics.
\newblock {\em Journal of computational physics}, 124(1):177--191, 1996.

\bibitem{riva2014verification}
Fabio Riva, Paolo Ricci, Federico~D Halpern, S{\'e}bastien Jolliet, Joaquim Loizu, and Annamaria Mosetto.
\newblock Verification methodology for plasma simulations and application to a scrape-off layer turbulence code.
\newblock {\em Physics of Plasmas}, 21(6), 2014.

\end{thebibliography}

%% else use the following coding to input the bibitems directly in the
%% TeX file.

%% Refer following link for more details about bibliography and citations.
%% https://en.wikibooks.org/wiki/LaTeX/Bibliography_Management

%\begin{thebibliography}{00}

%% For authoryear reference style
%% \bibitem[Author(year)]{label}
%% Text of bibliographic item

%\bibitem[Lamport(1994)]{lamport94}
 % Leslie Lamport,
 % \textit{\LaTeX: a document preparation system},
 % Addison Wesley, Massachusetts,
 % 2nd edition,
 % 1994.

%\end{thebibliography}
\end{document}